\newtheorem{theorem}{Theorem}[section]
\newtheorem{lemma}[theorem]{Lemma}
\newtheorem{proposition}[theorem]{Proposition}
\newtheorem{corollary}[theorem]{Corollary}
\theoremstyle{definition}
\newtheorem{conjecture}[theorem]{Conjecture}
\theoremstyle{remark}
\numberwithin{equation}{section}
\newcommand{\isomorphic}{\simeq}
\begin{document}

\title{Generic Representation Theory of the Heisenberg Group}

\author{Michael Crumley}
\address{Department of Mathematics, The University of Toledo,
Toledo, Ohio 43606}
\email{mikecrumley@hotmail.com}

\subjclass[2010]{Primary 20G05, 20G15}

\date{October 2010.}


\keywords{Generic Representation Theory, Unipotent Algebraic
Groups, Additive Group, Heisenberg Group}

\begin{abstract}
In this paper we extend a result for representations of the
Additive group $G_a$ given in \cite{SFB} to the Heisenberg group
$H_1$. Namely, if $p$ is greater than $2d$ then all
$d$-dimensional characteristic $p$ representations for $H_1$ can
be factored into commuting products of representations, with each
factor arising from a representation of the Lie algebra of $H_1$,
one for each of the representation's Frobenius layers. In this
sense, for a fixed dimension and large enough $p$, all
representations for $H_1$ look generically like representations
for direct powers of it over a field of characteristic zero.

The reader may consult chapter 13 of \cite{MyDissertation} for a
fuller account of what follows.
\end{abstract}

\maketitle


\section{Introduction}
Denote by $G_a$ and $H_1$ the Additive and Heisenberg groups
respectively over a fixed field $k$, i.e.~the space of all
unipotent upper triangular matrices of the form
\[\left(%
\begin{array}{cc}
  1 & x \\
  0 & 1 \\
\end{array}%
\right) \hspace{1cm} \text{and} \hspace{1cm}
\left(%
\begin{array}{ccc}
  1 & x & z \\
  0 & 1 & y \\
  0 & 0 & 1 \\
\end{array}%
\right)
\]
Throughout we prefer to think of these as affine group schemes
(see \cite{waterhouse}), i.e.~as representable functors on
$k$-algebras represented by the Hopf algebras (see
\cite{hopfalgebras})
\begin{gather*}
A = k[x] \\
\Delta:x \mapsto x \otimes 1 + 1 \otimes x \\
 \varepsilon: x \mapsto 0
\end{gather*}
and
\begin{gather*}
A = k[x,y,z] \\
 \Delta: x \mapsto 1 \otimes x+ x \otimes 1, \hspace{.5cm} y
\mapsto 1 \otimes y + y \otimes 1, \hspace{.5cm}
 z \mapsto 1 \otimes z + x \otimes y + z \otimes 1 \\
 \varepsilon: x,y,z \mapsto 0
\end{gather*}
respectively.  The following is well known (see theorems 12.2.1
and 13.3.1 of \cite{MyDissertation}).

\begin{theorem} Let $k$ be a field of characteristic zero.
\label{GaandH1CharZeroThm}
\begin{enumerate}
\item{Every representation of $G_a$ over $k$ is of the form
$e^{xX}$ where $X$ is a nilpotent matrix over $k$, and any
nilpotent matrix $X$ gives a representation of $G_a$ according to
this formula.} \item{Every representation of $H_1$ over $k$ is of
the form $e^{xX+yY+(z-xy/2)Z}$, where $X,Y$ and $Z$ are nilpotent
matrices over $k$ satisfying $Z = [X,Y]$ and $[Z,X] = [Z,Y] = 0$,
and any such collection $X,Y,Z$ gives a representation of $H_1$
according to this formula.}
\end{enumerate}
\end{theorem}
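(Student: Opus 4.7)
\bigskip

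\noindent\textbf{Proof plan.}

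The plan is to treat part (1) as a warm-up for part (2). A $d$-dimensional representation of $G_a$ is the same data as a matrix $M(x) \in M_d(k[x])$ with $M(0) = I$ satisfying the cocycle $M(x+y) = M(x) M(y)$ (this is the translation of the Hopf algebra comultiplication). Differentiating the cocycle in $y$ and evaluating at $y = 0$ gives the matrix ODE $M'(x) = M(x) X$ with $X := M'(0)$, whose unique solution is the formal series $M(x) = e^{xX}$. Since the entries of $M$ lie in $k[x]$, this power series must terminate, which in characteristic zero forces $X$ to be nilpotent. Conversely, for any nilpotent $X$ the exponential $e^{xX}$ is polynomial and satisfies the cocycle, so the classification is complete.

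For part (2), a representation is a matrix $M(x,y,z) \in M_d(k[x,y,z])$ with $M(0,0,0)=I$ and with cocycle dictated by $\Delta$, namely $M(x_1+x_2,\, y_1+y_2,\, z_1+z_2+x_1y_2) = M(x_1,y_1,z_1) M(x_2,y_2,z_2)$. The key observation is the factorization in $H_1$
\[
(x,y,z) \;=\; (x,0,0)\,(0,y,0)\,(0,0,z-xy),
\]
which, applied to $M$, yields $M(x,y,z) = M(x,0,0)\, M(0,y,0)\, M(0,0,z-xy)$. Each factor is a representation of a one-parameter subgroup isomorphic to $G_a$, so by part (1) there exist nilpotent matrices $X,Y,Z$ with $M(x,y,z) = e^{xX} e^{yY} e^{(z-xy)Z}$. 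I would then extract the commutator relations from the group law: centrality of the $z$-axis gives $M(0,0,z')M(x,y,z) = M(x,y,z+z') = M(x,y,z)M(0,0,z')$, so $e^{z'Z}$ commutes with $e^{xX}$ and with $e^{yY}$, forcing $[Z,X]=[Z,Y]=0$. Comparing $M(0,y,0)M(x,0,0) = M(x,y,0) = e^{xX}e^{yY}$ and taking the $\partial_x\partial_y$ coefficient at the origin yields $YX = XY - Z$, i.e.\ $Z = [X,Y]$.

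The final step is to rewrite $e^{xX}e^{yY}e^{(z-xy)Z}$ in the form stated in the theorem. Because $Z$ is central in the Lie subalgebra generated by $X,Y,Z$, the Baker--Campbell--Hausdorff series truncates: $e^{xX}e^{yY} = e^{xX + yY + \tfrac{xy}{2}[X,Y]} = e^{xX+yY+\tfrac{xy}{2}Z}$, and multiplying by $e^{(z-xy)Z}$ (which commutes with the exponent) collapses the product to $e^{xX+yY+(z-xy/2)Z}$. For the converse direction, given nilpotent $X,Y,Z$ with $Z=[X,Y]$ and $[Z,X]=[Z,Y]=0$, the same BCH truncation shows that the expression $e^{xX+yY+(z-xy/2)Z}$ satisfies the Heisenberg cocycle, and nilpotency guarantees polynomial entries so that we genuinely obtain a morphism of affine group schemes.

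The main obstacle I anticipate is bookkeeping rather than conceptual: making sure the sign/coefficient conventions in the group law $(x_1,y_1,z_1)(x_2,y_2,z_2) = (x_1+x_2,\, y_1+y_2,\, z_1+z_2+x_1y_2)$ consistently produce the $(z - xy/2)$ correction, and checking that the BCH truncation really does reduce verification of the cocycle to the three Lie relations $[X,Y]=Z$, $[Z,X]=[Z,Y]=0$. Everything else is essentially a formal consequence of part (1) applied to the three coordinate one-parameter subgroups, together with the centrality of the $z$-axis.
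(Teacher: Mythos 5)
Your argument is correct in substance but takes a genuinely different route from the paper's. You work with the group law directly: restrict to the three coordinate copies of $G_a$, apply part (1) to get $e^{xX}$, $e^{yY}$, $e^{wZ}$, exploit the factorization $(x,y,z)=(x,0,0)(0,y,0)(0,0,z-xy)$ and the centrality of the $z$-axis, and finish with the truncated Baker--Campbell--Hausdorff identity. The paper never leaves the comodule picture: it extracts the coefficient matrices $(c_{ij})^{\vec r}$ of each monomial and derives a purely combinatorial ``fundamental relation'' among them (Propositions \ref{GaFundamentalRelationProp} and \ref{H1FundamentalRelationProp}); in characteristic zero this forces $(c_{ij})^r = X^r/r!$ for $G_a$, and for $H_1$ the key input is the identity $X^nY^m=\sum_l l!{n \choose l}{m \choose l}Z^lY^{m-l}X^{n-l}$ of Lemma \ref{H1MainThmLemma} (see the remark following it). Your approach is shorter and more conceptual in characteristic zero; the paper's formulation is heavier here but is exactly the one that survives when division by $p$ fails, which is what powers Theorems \ref{GaCharpTheorem} and \ref{TheMainTheorem}.

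Two points to repair. First, the displayed chain $M(0,y,0)M(x,0,0)=M(x,y,0)=e^{xX}e^{yY}$ is wrong as written: by your own factorization $M(x,y,0)=e^{xX}e^{yY}e^{-xyZ}$ (it is $M(x,y,xy)$ that equals $e^{xX}e^{yY}$), and if $M(x,y,0)$ really equaled $e^{xX}e^{yY}$ you would conclude $[X,Y]=0$. The corrected comparison $e^{yY}e^{xX}=M(x,y,0)=e^{xX}e^{yY}e^{-xyZ}$ still gives $YX=XY-Z$ on the $xy$-coefficient, so your conclusion stands. Second, in the converse direction, nilpotency of $X,Y,Z$ individually does not make the exponent $xX+yY+(z-xy/2)Z$ nilpotent (a sum of nilpotents need not be nilpotent); you should note that the given relations make the span of $X,Y,Z$ a nilpotent Lie algebra acting by nilpotent operators, so Engel's theorem puts all three simultaneously in strictly upper-triangular form, after which the exponential is visibly polynomial.
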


As regards the positive characteristic theory of $G_a$, the
following is also known, due to A Suslin, E M Friedlander and C P
Bendel, 1997.

\begin{theorem} \label{GaCharpTheorem} (see proposition 1.2 of \cite{SFB})
Let $k$ be a field of positive characteristic $p$.  Then every
representation of $G_a$ over $k$ is of the form
\[ e^{X_0x}e^{X_1 x^p} \ldots e^{X_m p^m} \]
where $X_0, \ldots, X_m$ are commuting matrices with entries in
$k$ satisfying $X_i^p = 0$.  Further, any such collection of
commuting, $p$-nilpotent matrices over $k$ gives a representation
of $G_a$ according to the above formula.
\end{theorem}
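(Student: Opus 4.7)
The plan is to translate the group-scheme homomorphism condition into explicit polynomial identities on the coefficients of a defining matrix and exploit Lucas's theorem. A representation $\rho: G_a \to GL_n$ corresponds, by Yoneda, to a polynomial matrix $M(x) = \sum_{n \geq 0} A_n x^n \in M_n(k[x])$ with $A_0 = I$ satisfying $M(x+y) = M(x)M(y)$. Expanding both sides and comparing coefficients of $x^m y^n$ yields the central recurrence
\[ \binom{m+n}{m} A_{m+n} = A_m A_n \qquad \text{for all } m,n \geq 0. \]

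Define $X_i := A_{p^i}$ for each $i \geq 0$; these vanish for $p^i > \deg M$, so only finitely many are nonzero. The key combinatorial input is Lucas's theorem: writing $m = \sum m_i p^i$ and $n = \sum n_i p^i$ in base $p$, $\binom{m+n}{m} \equiv \prod_i \binom{m_i+n_i}{m_i} \pmod p$, which is a unit in $k$ exactly when the base-$p$ addition has no carries. Applying this with $m = p^i$, $n = p^j$ for $i \neq j$ (binomial equals $1$) immediately forces $X_i X_j = X_j X_i$. For $p$-nilpotence, one first establishes $A_{kp^i} = X_i^k/k!$ for $0 \leq k < p$ by inducting on $k$ inside the carry-free regime, then applies the recurrence with $m = p^i$ and $n = (p-1)p^i$: since $\binom{p^{i+1}}{p^i}$ vanishes in $k$ by Lucas, one obtains $0 = X_i \cdot X_i^{p-1}/(p-1)!$, and because Wilson's theorem ensures $(p-1)! \neq 0$ in $k$, this forces $X_i^p = 0$.

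The factorization step is a more ambitious induction: for arbitrary $n = \sum n_i p^i$ one shows $A_n = \prod_i X_i^{n_i}/n_i!$ by peeling off a single base-$p$ digit at a time via the no-carry cases of the recurrence and using the commutativity of the $X_i$ established above. This monomial is precisely the coefficient of $x^n$ in the finite product $\prod_i e^{X_i x^{p^i}}$, yielding the claimed factorization of $M(x)$. For the converse, each factor $e^{X_i x^{p^i}}$ is itself a representation of $G_a$: the Frobenius identity $(x+y)^{p^i} = x^{p^i} + y^{p^i}$ combined with the $p$-nilpotence of $X_i$ makes the exponential a well-defined polynomial satisfying $e^{X_i (x+y)^{p^i}} = e^{X_i x^{p^i}} e^{X_i y^{p^i}}$; the commutativity hypothesis then allows the product of these factors to remain a group-scheme homomorphism.

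The main obstacle is the combinatorial bookkeeping in the factorization step: organizing the inductive reduction so that each step falls into a carry-free case where the relevant binomial is invertible in $k$, and verifying that Lucas's theorem consistently produces the factorial denominators $n_i!$ (which are always units since $n_i < p$). Everything else reduces to formal manipulation of exponentials of commuting $p$-nilpotent matrices.
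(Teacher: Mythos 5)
Your proposal is correct and follows essentially the same route as the paper's source (Proposition 1.2 of \cite{SFB}, reproduced here as Propositions \ref{GaFundamentalRelationProp} and \ref{GaCharpLemma1}): derive the fundamental relation $A_mA_n = \binom{m+n}{m}A_{m+n}$ from the comultiplication, set $X_i = A_{p^i}$, use Lucas' theorem to get commutativity, $p$-nilpotence, and the digit formula $A_n = \prod_i X_i^{n_i}/n_i!$, and then reassemble into the product of truncated exponentials. The details you supply (Wilson's theorem for $(p-1)! \neq 0$, the carry-free peeling of digits, and the Frobenius identity for the converse) are all sound.
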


Using part 1.~of theorem \ref{GaandH1CharZeroThm} and theorem
\ref{GaCharpTheorem}, we make the simple observation that, if $p
\geq \text{dimension}$, then being nilpotent and $p$-nilpotent are
identical concepts.  We see then that, for $p >>
\text{dimension}$, the characteristic $p$ representation theory of
$G_a$ and the characteristic zero theory of $G_a^\infty$ are in
perfect analogy (see chapter 11 of \cite{MyDissertation} for an
account of the representation theory of direct products).  This
has motivated the following, which is the main theorem of this
paper.

\begin{theorem} \label{TheMainTheorem}
Let $k$ be a field of characteristic $p$, and suppose $p \geq 2d$.
Then every $d$-dimensional representation of the Heisenberg group
over $k$ is of the form
\begin{equation*}
\begin{split}
 &e^{xX_0+yY_0 + (z-xy/2)Z_0} e^{x^p X_1 + y^p Y_1 + (z^p-x^p
y^p/2)Z_1}\\ &\ldots e^{x^{p^m} X_m + y^{p^m} Y_m + (z^{p^m} -
x^{p^m} y^{p^m}/2)Z_m}
\end{split}
\end{equation*}
where $X_0,Y_0,Z_0,X_1,Y_1,Z_1\ldots,X_m, Y_m,Z_m$ is a collection
of $d \times d$ nilpotent matrices over $k$ satisfying
\begin{enumerate}
\item{$[X_i,Y_i] = Z_i$ and $[Z_i,X_i] = [Z_i,Y_i] = 0$ for every
$i$} \item{whenever $i \neq j$, $X_i,Y_i$ and $Z_i$ commute with
all of $X_j,Y_j$ and $Z_j$}
\end{enumerate}
Further, any such collection of $d \times d$ matrices gives a
representation of $H_1$ over $k$ according to the above formula.
\end{theorem}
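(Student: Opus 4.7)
The proof separates into a forward direction (matrices give a representation) and a reverse direction (every representation arises this way), with the latter being substantially harder. For the forward direction, condition (2) lets the $i$-th and $j$-th factors in the asserted product commute, so it suffices to verify that a single factor $F_i(x,y,z) := e^{x^{p^i}X_i + y^{p^i}Y_i + (z^{p^i} - x^{p^i}y^{p^i}/2)Z_i}$ is a representation. The $i$-fold iterate of the Frobenius $(x,y,z)\mapsto(x^p,y^p,z^p)$ is a Hopf endomorphism of $k[x,y,z]$, reducing the task to $i=0$. That case mirrors the characteristic-zero statement of Theorem~\ref{GaandH1CharZeroThm}(2): with $A = xX + yY + (z-xy/2)Z$ and $A' = x'X + y'Y + (z'-x'y'/2)Z$, one has $[A,A'] = (xy' - yx')Z$, which is central, so BCH reduces to $e^A e^{A'} = e^{A + A' + [A,A']/2}$, and this matches $F_0$ evaluated at the $H_1$-product $(x+x', y+y', z+z'+xy')$. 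The bound $p \geq 2d$ guarantees that every exponent appearing is annihilated by its $p$-th power (all lie in the nilpotent associative subalgebra generated by the $d$-nilpotent $X,Y,Z$), so the exponentials are polynomials of degree less than $p$ and the universal BCH identity from characteristic zero descends to characteristic $p$.

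For the reverse direction, let $\rho : H_1 \to GL_d$ be a representation. The strategy is to restrict to the three coordinate copies of $G_a$ in $H_1$, namely $\{(x,0,0)\}$, $\{(0,y,0)\}$, and the central subgroup $\{(0,0,z)\}$. Theorem~\ref{GaCharpTheorem} applied to each restriction produces three lists of $p$-nilpotent matrices $\{A_i\}$, $\{B_i\}$, $\{C_i\}$, commuting within each list. Using the identity $(x,y,z) = (x,0,0)(0,y,0)(0,0,z-xy)$ in $H_1$ together with the characteristic-$p$ identity $(z - xy)^{p^k} = z^{p^k} - x^{p^k}y^{p^k}$, one obtains an initial factorization of $\rho(x,y,z)$ as a triple ordered product of level-$i$ exponentials built from the three lists.

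The main obstacle is to reassemble this triple product into the single product of combined exponentials $e^{x^{p^i}X_i + y^{p^i}Y_i + (z^{p^i} - x^{p^i}y^{p^i}/2)Z_i}$ asserted in the theorem, with the Heisenberg relations $[X_i,Y_i] = Z_i$ in each layer and with cross-layer commutations. The principal tool is the multiplicativity of $\rho$ applied to the $H_1$-commutator identity $[(x,0,0),(0,y,0)] = (0,0,xy)$: comparing Frobenius-homogeneous bidegrees $(p^i,p^i)$ on both sides forces $[A_i,B_i]$ to be a central expression that reconciles with $C_i$, yielding the within-layer Heisenberg relation after setting $X_i := A_i$, $Y_i := B_i$, $Z_i := [A_i,B_i]$; comparing mixed bidegrees $(p^i,p^j)$ for $i \neq j$ forces the cross-layer commutations of condition (2). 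The hypothesis $p \geq 2d$ is essential precisely here: it keeps each exponential series polynomial of degree less than $p$, so that monomials in $x,y,z$ of distinct Frobenius bidegrees remain linearly independent and the layer-by-layer coefficient extraction is unambiguous. The cleanest execution I foresee proceeds by induction on the highest Frobenius level $m$ occurring in $\rho$, with base case $m = 0$ being a representation of the first Frobenius kernel of $H_1$ (equivalent to a restricted representation of the Heisenberg Lie algebra, essentially Theorem~\ref{GaandH1CharZeroThm}(2)), and inductive step multiplying $\rho$ by the inverse of its tentative top factor $F_m$ to reduce $m$.
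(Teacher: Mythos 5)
Your overall architecture does track the paper's: restrict to the three coordinate copies of $G_a$ to produce the layers via Theorem \ref{GaCharpTheorem}, extract the layer relations from the $H_1$-commutator identity by comparing coefficients, and treat sufficiency by a Baker--Campbell--Hausdorff-type reordering. But there is a genuine gap at the step you describe as ``comparing Frobenius-homogeneous bidegrees $(p^i,p^i)$ forces $[A_i,B_i]$ to be a central expression that reconciles with $C_i$,'' and you have misidentified where the hypothesis $p\geq 2d$ actually does its work. Extracting the coefficient of $x^{p^i}y^{p^i}$ from $\rho(x,0,0)\rho(0,y,0)=\rho(0,y,0)\rho(x,0,0)\rho(0,0,xy)$ does not give $[A_i,B_i]=C_i$ outright; it gives
\[ A_iB_i \;=\; B_iA_i \;+\; C_i \;+\; \sum_{l=1}^{p^i-1} C_{(l)}\,B_{(p^i-l)}\,A_{(p^i-l)}, \]
where $A_{(n)}=\Gamma(n)^{-1}A_0^{n_0}\cdots A_m^{n_m}$ as in Proposition \ref{GaCharpLemma1}. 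The linear independence of monomials of distinct bidegrees, which you cite as the reason the bound is needed, holds for every $p$ and every degree and does nothing to remove the middle sum. What kills it is the carrying argument (Lemma \ref{H1remark}): for $0<l<p^i$ the sum $(p^i-l)+l$ carries, so some $p$-digit of $l$ or of $p^i-l$ is at least $p/2\geq d$, and the corresponding power of a nilpotent $d\times d$ matrix vanishes. This is the one place $p\geq 2d$ enters, it is exactly what fails in the paper's characteristic-$2$ counterexamples, and without it your bidegree comparison yields no relation at all. The same remark applies to your mixed-bidegree step for $[A_i,B_j]=0$, $i\neq j$.

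A second, smaller issue concerns the reassembly. Passing from the triple ordered product $\rho(x,0,0)\rho(0,y,0)\rho(0,0,z-xy)$ to the single product of combined exponentials, and checking that the relations (1)--(2) really suffice to define a comodule, is the technical bulk of the paper: it rests on the reordering identity $X^nY^m=\sum_l l!{n\choose l}{m\choose l}Z^lY^{m-l}X^{n-l}$ of Lemma \ref{H1MainThmLemma} together with the digit-by-digit induction of Theorem \ref{H1CharpLargeThm2}, none of which appears in your sketch. Moreover, your proposed induction on the top Frobenius level by multiplying $\rho$ by $F_m^{-1}$ is circular as stated: to know that $\rho F_m^{-1}$ is again a representation of $H_1$ you already need the cross-layer commutations and the fact that $F_m$ is itself a representation commuting with the rest, which is what the induction is supposed to deliver.
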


This result is perhaps more surprising than theorem
\ref{GaCharpTheorem} in that, unlike modules for $G_a$, modules
for $H_1$ in characteristic $p << \text{dim}$ generally look
hardly at all like representation for $H_1^{\infty}$ in
characteristic zero; for instance, it is not generally the case
that $[X_i,Y_i] = Z_i$ for all $i$, nor is it the case that $X_i$
commutes with $Y_j$ for $i \neq j$ (see section
\ref{counterexampleSection} for a counterexample). It is only when
$p$ becomes large enough with respect to dimension that these
relations necessarily hold.

Our method of proof is quite elementary.  We view a representation
of an algebraic group over $k$ on a vector space $V$ as a comodule
over its representing Hopf algebra (see section 3.2 of
\cite{waterhouse} or chapter 2 of \cite{hopfalgebras}), i.e. as a
$k$-linear map $\rho:V \rightarrow V \otimes A$ satisfying the
diagrams
\begin{equation}
\label{comodDiagram1}
\begin{diagram}
V & \rTo^\rho & V \otimes A \\
\dTo^\rho & & \dTo_{1 \otimes \Delta} \\
V \otimes A & \rTo_{\rho \otimes 1} & V \otimes A \otimes A \\
\end{diagram}
\end{equation}
\begin{equation}
\label{comodDiagram2}
\begin{diagram}
V & \rTo^\rho & V \otimes A \\
& \rdTo_{\isomorphic} & \dTo_{1 \otimes \varepsilon} \\
& & V \otimes k \\
\end{diagram}
\end{equation}
If we fix a basis $\{e_1, \ldots, e_n\}$ for $V$, then we can
write $\rho:e_j \mapsto \sum_i e_i \otimes a_{ij}$, where
$(a_{ij})$ is the matrix formula for the representation in this
basis. Then the diagrams above are, in equation form
\begin{equation}
\label{comodEquation1} \Delta(a_{ij}) = \sum_k a_{ik} \otimes
a_{kj}
\end{equation}
\begin{equation}
\label{comodEquation2} \varepsilon(a_{ij}) = \delta_{ij}
\end{equation}

These equations induce a combinatorial relation on certain
matrices associated to a representation, which serve as necessary
and sufficient conditions for them to define a representation.  A
systematic examination of this relation will yield the theorem.


\section{The Additive Group}

Here we record several results on the Additive group which will be
necessary to prove our main result on the Heisenberg group.  All
of what is done here was originally published in the proof of
proposition 1.2 of \cite{SFB}, but our notation differs markedly,
and so the reader may wish to consult chapter 12 of
\cite{MyDissertation} instead.

Let $k$ be any field, and let $(a_{ij})$ be a representation of
$G_a$ over $k$, which we view as an invertible matrix with entries
in $k[x]$, e.g.
\[\left(%
\begin{array}{ccc}
  1 & x & x^2 \\
  0 & 1 & 2x \\
  0 & 0 & 1 \\
\end{array}%
\right)
\]
Associated to this representation is, for each $r \in \mathbb{N}$,
the matrix of coefficients of the monomial $x^r$, which we denote
as $(c_{ij})^r$. In the above case these are given by
\[
(c_{ij})^0 = \left(%
\begin{array}{ccc}
  1 & 0 & 0 \\
  0 & 1 & 0 \\
  0 & 0 & 1 \\
\end{array}%
\right) \hspace{1cm}
(c_{ij})^1 = \left(%
\begin{array}{ccc}
  0 & 1 & 0 \\
  0 & 0 & 2 \\
  0 & 0 & 0 \\
\end{array}%
\right) \hspace{1cm}
(c_{ij})^2 = \left(%
\begin{array}{ccc}
  0 & 0 & 1 \\
  0 & 0 & 0 \\
  0 & 0 & 0 \\
\end{array}%
\right)
\]
with $(c_{ij})^r = 0$ for all other $r$.  Note that, if $(a_{ij})$
is the matrix formula for the representation, then $(a_{ij}) =
\sum_r (c_{ij})^r x^r$.

\begin{proposition} \label{GaFundamentalRelationProp}
Let $k$ be any field.  A collection of $d \times d$ matrices
$(c_{ij})^r$ define a representation of $G_a$ over $k$ if and only
if $(c_{ij})^r = 0$ for all but finitely many $r$, $(c_{ij})^0 =
1$, and that for every $r$ and $s$
\begin{equation}
\label{GaFundamentalRelation} (c_{ij})^r(c_{ij})^s = {r+s \choose
r} (c_{ij})^{r+s}
\end{equation}
\end{proposition}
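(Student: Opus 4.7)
The plan is to translate the two comodule equations (\ref{comodEquation1}) and (\ref{comodEquation2}) directly into conditions on the coefficient matrices $(c_{ij})^r$ using the specific coalgebra structure on $k[x]$ coming from $G_a$.

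First I would handle the finiteness and counit conditions. Writing $a_{ij} = \sum_r c_{ij}^r x^r$, the requirement that $(a_{ij})$ be a matrix over $A = k[x]$ is exactly the statement that $(c_{ij})^r = 0$ for all but finitely many $r$. Applying $\varepsilon$ with $\varepsilon(x) = 0$ kills every $x^r$ with $r \geq 1$, so equation (\ref{comodEquation2}) reduces immediately to $c_{ij}^0 = \delta_{ij}$, i.e.\ $(c_{ij})^0 = I$.

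The main computation is comparing the two sides of (\ref{comodEquation1}). Using $\Delta(x) = x \otimes 1 + 1 \otimes x$ and the binomial theorem, I get
\[ \Delta(a_{ij}) = \sum_{r} c_{ij}^r \sum_{s+t=r} \binom{r}{s} x^s \otimes x^t = \sum_{s,t \geq 0} \binom{s+t}{s} c_{ij}^{s+t} \, x^s \otimes x^t. \]
On the other hand,
\[ \sum_k a_{ik} \otimes a_{kj} = \sum_{s,t \geq 0} \Bigl( \sum_k c_{ik}^s c_{kj}^t \Bigr) x^s \otimes x^t, \]
and the inner sum is exactly the $(i,j)$ entry of the matrix product $(c_{ij})^s (c_{ij})^t$. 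Since the monomials $x^s \otimes x^t$ are linearly independent in $A \otimes A$, equating coefficients yields equation (\ref{GaFundamentalRelation}) for every $r = s+t$ and every $(i,j)$, i.e.\ as a matrix equation $(c_{ij})^s (c_{ij})^t = \binom{s+t}{s} (c_{ij})^{s+t}$. Running the same computation in reverse gives the converse: any finite family of matrices satisfying (\ref{GaFundamentalRelation}) and $(c_{ij})^0 = I$ produces, via $a_{ij} = \sum_r c_{ij}^r x^r$, a comodule map $\rho$, and invertibility of $(a_{ij})$ then follows because $(c_{ij})^0 = I$ forces the constant term of $\det(a_{ij})$ to be $1$, hence nonzero in $k[x]$.

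There is no real obstacle here: the argument is a straightforward bookkeeping exercise in expanding the coproduct, and the only thing to be careful about is keeping track of which side of the tensor product carries which power of $x$ so that the two multinomial indices $s$ and $t$ match up correctly with the binomial coefficient $\binom{s+t}{s}$. The content of the proposition is really just the observation that the grading on $k[x]$ by degree, together with the primitivity of $x$, rigidifies the comodule axiom into the finite family of bilinear identities (\ref{GaFundamentalRelation}).
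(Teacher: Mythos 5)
Your argument is correct and is essentially the paper's own approach: the paper defers the $G_a$ case to \cite{SFB}, but the identical coefficient-matching computation (expand $\Delta(a_{ij})$ using primitivity of the generators, compare with $\sum_k a_{ik}\otimes a_{kj}$ on the basis of monomial tensors) is what it carries out in detail for the Heisenberg analogue, Proposition \ref{H1FundamentalRelationProp}. The only blemish is your closing aside on invertibility --- a unit constant term of $\det(a_{ij})$ does not make a matrix invertible over $k[x]$ --- but this is immaterial, since the paper defines a representation as a comodule via diagrams (\ref{comodDiagram1}) and (\ref{comodDiagram2}), and invertibility of $(a_{ij})$ then comes for free from the antipode.
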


\begin{proof}
See the proof of proposition 1.2 of \cite{SFB}, or section 12.1 of
\cite{MyDissertation}.

\end{proof}

\begin{corollary}
Let $k$ have characteristic zero.  Then every representation of
$G_a$ over $k$ is of the form $e^{xX}$, where $X$ is a nilpotent
matrix over $k$.
\end{corollary}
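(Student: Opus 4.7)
The plan is to leverage Proposition \ref{GaFundamentalRelationProp} and read off the matrices $(c_{ij})^r$ directly from a single matrix $X$. Write $C_r := (c_{ij})^r$ for brevity. The hypotheses of the proposition give $C_0 = I$, $C_r = 0$ for all sufficiently large $r$, and the fundamental relation
\[
C_r C_s = \binom{r+s}{r} C_{r+s}
\]
for every $r,s \geq 0$.

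First, set $X := C_1$. Specializing the fundamental relation to $s = 1$ and $r = n-1$ yields $C_{n-1} X = n\, C_n$, and since we are in characteristic zero we may divide by $n$. A straightforward induction on $n$ then shows
\[
C_n = \frac{X^n}{n!}
\]
for every $n \geq 0$. In particular, choosing $n$ large enough that $C_n = 0$ gives $X^n / n! = 0$, so $X^n = 0$ and $X$ is nilpotent.

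Next, recover the matrix formula for the representation. Since $(a_{ij}) = \sum_r C_r\, x^r$ and the sum is finite, we obtain
\[
(a_{ij}) = \sum_{n \geq 0} \frac{X^n}{n!} x^n = e^{xX},
\]
where the exponential makes sense because $X$ is nilpotent. This proves the representation has the claimed form.

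For the converse, given any nilpotent $X$ over $k$, set $C_r := X^r/r!$; these vanish for $r$ past the nilpotency index, satisfy $C_0 = I$, and the identity $\frac{X^r}{r!} \cdot \frac{X^s}{s!} = \binom{r+s}{r} \frac{X^{r+s}}{(r+s)!}$ is exactly the fundamental relation, so Proposition \ref{GaFundamentalRelationProp} guarantees that $e^{xX}$ defines a representation. There is no serious obstacle here; the only substantive point is that characteristic zero lets us solve the recurrence $n C_n = C_{n-1} X$ uniquely in terms of $X = C_1$, which is precisely what fails in characteristic $p$ and forces the more intricate statement of Theorem \ref{GaCharpTheorem}.
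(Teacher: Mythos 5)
Your argument is correct and is essentially the paper's own proof: set $X = (c_{ij})^1$, use the fundamental relation together with the invertibility of $n$ in characteristic zero to deduce $(c_{ij})^n = X^n/n!$, conclude nilpotency from eventual vanishing, and assemble $e^{xX}$. The added converse direction is fine but not required by the statement.
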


\begin{proof}
Let $(a_{ij})$ be any representation, and set $X = (c_{ij})^1$.
Using the fact that $\frac{1}{r!}$ is defined for all $r$,
examination of equation \ref{GaFundamentalRelation} yields
$(c_{ij})^r = \frac{1}{r!}X^r$. The necessity that $(c_{ij})^r$
vanish for large enough $r$ forces $X$ to be nilpotent.  Then the
matrix formula for this representation is
\begin{equation*}
\begin{split}
 (a_{ij}) &= (c_{ij})^0 + x(c_{ij})^1 + \ldots +
x^n (c_{ij})^n \\
 &= 1 + xX + \ldots + \frac{x^n X^n}{n!} \\
&= e^{xX}
\end{split}
\end{equation*}
\end{proof}

In the positive characteristic case we cannot assume that
$\frac{1}{r!}$ is defined for all $r$.  We shall need the
following.

\begin{theorem}\label{Lucas'Thm} (Lucas' theorem)
Let $n$ and $a,b, \ldots ,z$ be non-negative integers with $a + b
+ \ldots + z = n$, $p$ a prime.  Write $n = n_m p^m + n_{m-1}
p^{m-1} + \ldots + n_0$ in $p$-ary notation, similarly for $a, b,
\ldots , z$. Then, modulo $p$
\[ {n \choose a,b, \ldots, z}= \left\{
\begin{array}{c}
  0 \hspace{.3cm}\text{ \emph{if for some }$i$, $a_i + b_i + \ldots + z_i \geq p$} \\
  {n_0 \choose a_0, b_0, \ldots, z_0}{n_1 \choose a_1, b_1,
\ldots, z_1} \ldots {n_m \choose a_m, b_m, \ldots , z_m} \hspace{.3cm} \text{ \emph{otherwise}} \\
\end{array}
\right.
\]
In other words, ${n \choose a,b, \ldots, z}$ is zero if there is
some `carrying' in computing the bottom sum; otherwise, it is the
product of the multinomial coefficients of the individual digits.

\end{theorem}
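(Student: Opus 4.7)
My plan is to exploit the ``freshman's dream'' in characteristic $p$. Working in the polynomial ring $k[t_1, \ldots, t_r]$, where $r$ is the number of summands labelled $a, b, \ldots, z$, I would first note that iterating the Frobenius identity gives $(t_1 + \cdots + t_r)^{p^i} = t_1^{p^i} + \cdots + t_r^{p^i}$ for every $i \geq 0$. Using the $p$-ary expansion $n = \sum_{i=0}^m n_i p^i$, this lets me factor
\[
(t_1 + \cdots + t_r)^n \;=\; \prod_{i=0}^m (t_1 + \cdots + t_r)^{n_i p^i} \;=\; \prod_{i=0}^m (t_1^{p^i} + \cdots + t_r^{p^i})^{n_i},
\]
reducing the whole problem to a single coefficient comparison in a polynomial ring of characteristic $p$.

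Next I would expand both sides via the multinomial theorem and match the coefficient of $t_1^a t_2^b \cdots t_r^z$. The left side contributes ${n \choose a, b, \ldots, z}$ directly. Expanding the right side, the factor indexed by $i$ contributes an exponent of the form $a_{j,i} p^i$ to $t_j$, subject to the per-factor constraint $\sum_j a_{j,i} = n_i$. The crucial observation is that $n_i < p$ forces $0 \leq a_{j,i} \leq n_i < p$, so by uniqueness of base-$p$ representation the only way $\sum_i a_{j,i} p^i$ can equal the $t_j$-exponent is for each $a_{j,i}$ to be precisely the $i$-th base-$p$ digit of that exponent. Consequently the coefficient on the right is $\prod_i {n_i \choose a_i, b_i, \ldots, z_i}$ when the columnwise digit sums all hit $n_i$ exactly, and zero otherwise --- which is precisely the no-carrying condition of the theorem statement.

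The main obstacle I anticipate is packaging the uniqueness step cleanly: one must verify that the bound $a_{j,i} < p$ really does pin down the $a_{j,i}$ as genuine base-$p$ digits, and confirm that the two characterisations of ``carrying'' --- ordinary addition overflowing a digit, versus the columnwise digit sums failing to match the digits of $n$ --- are equivalent. Once that identification is made, everything else is routine bookkeeping, and no induction on $n$ or Kummer-style valuation argument is needed.
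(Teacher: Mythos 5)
Your proof is correct, and it is worth noting that the paper itself does not prove this statement at all: it simply defers to the citation \cite{LucasThm} (the ``See \cite{LucasThm} for a proof of these facts'' remark), so any complete argument you give is already more than the paper supplies. Your route is the standard generating-function proof: reduce mod $p$, use the Frobenius identity to factor $(t_1+\cdots+t_r)^n$ as $\prod_i (t_1^{p^i}+\cdots+t_r^{p^i})^{n_i}$, and compare coefficients of $t_1^a\cdots t_r^z$. The one step you flag as a potential obstacle does go through cleanly: since each $a_{j,i}\le n_i\le p-1$, the exponents $\sum_i a_{j,i}p^i$ are genuine base-$p$ expansions, so uniqueness of base-$p$ representation pins down $a_{j,i}$ as the $i$-th digit of the corresponding target exponent; and the two notions of ``carrying'' coincide because the carry into column $0$ is zero and, by induction on $i$, all carries vanish precisely when every raw column sum $a_i+b_i+\cdots+z_i$ is less than $p$, in which case $\sum_j a_{j,i}=n_i$ follows again from uniqueness of the base-$p$ expansion of $n$. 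With that observation written out, your argument is complete and self-contained, with no need for induction on $n$ or a Kummer-type valuation computation.
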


\begin{corollary} Let $p$ be a prime, $n,r$ and $s$ non-negative
integers. \label{Lucas'Cor}
\begin{enumerate}
\item{${n \choose r}$ is non-zero if and only if every $p$-digit
of $n$ is greater than or equal to the corresponding $p$-digit of
$r$} \item{${r+s \choose r}$ is non-zero if and only if there is
no carrying for the sum $r+s$.}

\end{enumerate}
\end{corollary}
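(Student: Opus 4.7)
The plan is to derive both parts essentially as direct translations of the multinomial form of Lucas' theorem stated above, applied to two or three summands.

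For part 2, I would apply Lucas' theorem directly to the binomial coefficient ${r+s \choose r} = {r+s \choose r,s}$, treating it as a multinomial in two parts. The theorem then says this is nonzero if and only if, for every digit position $i$, $r_i + s_i < p$, i.e.\ if and only if the column-by-column base-$p$ addition of $r$ and $s$ never produces a carry. This is literally the statement of part 2, so essentially nothing needs to be done beyond invoking the theorem.

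For part 1, I would write ${n \choose r} = {n \choose r, n-r}$ and again apply Lucas' theorem, getting that it is nonzero iff $r_i + (n-r)_i < p$ for every $i$. The claim is then that this digitwise inequality on $r$ and $n-r$ is equivalent to the digitwise inequality $n_i \geq r_i$. For the forward direction, if $r_i + (n-r)_i < p$ for all $i$, there is no carrying in the sum $r + (n-r)$, so the $p$-ary digits of this sum are exactly $r_i + (n-r)_i$; since the sum equals $n$, this forces $n_i = r_i + (n-r)_i \geq r_i$. For the reverse direction, if $n_i \geq r_i$ for every $i$, then one may compute $n - r$ column by column without borrowing, yielding $(n-r)_i = n_i - r_i$, whence $r_i + (n-r)_i = n_i < p$.

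The only actual content is this elementary observation relating the absence of carrying in base-$p$ addition to digitwise comparison, which in turn is essentially the observation that subtraction in base $p$ can be performed without borrowing precisely when the minuend dominates the subtrahend digitwise. I don't anticipate any real obstacle; the proof should amount to a one-line appeal to Lucas for part 2 and a short digit-by-digit argument for part 1.
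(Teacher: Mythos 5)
Your proposal is correct. The paper offers no proof of this corollary at all (it simply points to a reference), so there is nothing to compare against; your derivation of both parts directly from the multinomial form of Lucas' theorem, together with the elementary equivalence between ``no carrying in $r+(n-r)$'' and ``$n$ dominates $r$ digitwise,'' is exactly the natural argument. Two tiny points you may wish to make explicit: in part~1 the rewriting ${n \choose r}={n \choose r,\,n-r}$ presupposes $r\le n$, but the case $r>n$ is covered by your own observation that digitwise domination forces $n\ge\sum r_ip^i=r$; and for the ``nonzero'' direction of Lucas one should note that each digit factor ${n_i \choose r_i,\,(n-r)_i}$ is a unit mod $p$ because $r_i!\,(n-r)_i!\,{n_i\choose r_i,\,(n-r)_i}=n_i!$ with $n_i<p$, so the product of digit multinomials really is nonzero mod $p$.
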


See \cite{LucasThm} for a proof of these facts.

\begin{proposition} \label{GaCharpLemma1}
Let $(a_{ij})$ be a representation of $G_a$ over a field $k$ of
characteristic $p>0$, given by the matricies $(c_{ij})^r$, and for
each $m$ set $X_m = (c_{ij})^{p^m}$.

\begin{enumerate}
\item{The $X_i$ commute, are nilpotent of order $\leq p$, and are
zero for all but finitely many $i$}

\item{For any $r \in \mathbb{N}$, the matrix $(c_{ij})^r$ is given
by
\[ (c_{ij})^r = \Gamma(r)^{-1} X_0^{r_0} X_1^{r_1} \ldots
X_m^{r_m} \] where $r= r_0 + r_1 p + \ldots + r_m p^m$ is the
$p$-ary expansion of $r$ and $\Gamma(r)
\stackrel{\emph{\text{def}}}{=} r_0!\ldots r_m!$}

\item{$(a_{ij})$, the matrix formula for the representation, is
given by
\[ e^{X_0x}e^{X_1x^p} \ldots e^{X_m p^m} \]}

\item{Any collection of commuting, $p$-nilpotent matrices $X_0,
\ldots, X_m$ gives a representation of $G_a$ over $k$ according to
the formula given in (3).}

\end{enumerate}

\end{proposition}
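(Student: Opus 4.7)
The plan is to derive all four parts from Proposition \ref{GaFundamentalRelationProp} together with the two cases of Corollary \ref{Lucas'Cor}. First, note the fundamental relation $(c_{ij})^r(c_{ij})^s = \binom{r+s}{r}(c_{ij})^{r+s}$ is symmetric in $r$ and $s$, so \emph{every} pair $(c_{ij})^r$, $(c_{ij})^s$ commutes; in particular the $X_i$ do. Finite support of the $X_i$ is immediate from finite support of the $(c_{ij})^r$. For $p$-nilpotence, iterate the fundamental relation $p$ times at $r=s=\cdots=p^m$ to obtain $X_m^p$ as a multinomial coefficient $\binom{p\cdot p^m}{p^m,\ldots,p^m}$ times $(c_{ij})^{p^{m+1}}$; adding $p$ copies of $p^m$ carries in base $p$, so by Lucas this coefficient vanishes.

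For part 2, I would induct on the number of nonzero $p$-ary digits of $r$. Within a single layer, iterating the relation gives $X_i^{r_i} = r_i!\,(c_{ij})^{r_i p^i}$ for $0\le r_i<p$, since no carrying occurs and the relevant product of binomials telescopes to $r_i!$. To combine layers, observe that summing $r_0, r_1 p, \ldots, r_m p^m$ involves no carrying, so the fundamental relation together with part 1 of Corollary \ref{Lucas'Cor} gives
\[ (c_{ij})^{r} = (c_{ij})^{r_0}(c_{ij})^{r_1 p}\cdots(c_{ij})^{r_m p^m} = \frac{1}{r_0! r_1! \cdots r_m!}\,X_0^{r_0}X_1^{r_1}\cdots X_m^{r_m}, \]
which is the desired formula. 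Part 3 then follows by summing $(a_{ij}) = \sum_r (c_{ij})^r x^r$ and reindexing via $p$-ary expansions: the sum factors as $\prod_i \sum_{r_i} \frac{1}{r_i!}(X_i x^{p^i})^{r_i} = \prod_i e^{X_i x^{p^i}}$, which is well defined since each $X_i$ is $p$-nilpotent and only finitely many are nonzero.

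For part 4, the converse, given commuting $p$-nilpotent $X_0,\ldots,X_m$ I would define $(c_{ij})^r$ by the formula in part 2 and check the hypotheses of Proposition \ref{GaFundamentalRelationProp}. Finite support and $(c_{ij})^0 = 1$ are trivial. To verify $(c_{ij})^r(c_{ij})^s = \binom{r+s}{r}(c_{ij})^{r+s}$, split on whether $r+s$ carries: if some digit satisfies $r_i+s_i\ge p$, the left side vanishes because $X_i^{r_i+s_i}=0$, and the right side vanishes by Lucas; if not, commutativity lets us regroup $X$'s by index, and the digit-wise identity $\binom{r_i+s_i}{r_i} = \frac{(r_i+s_i)!}{r_i! s_i!}$ matched against the product form of Lucas gives equality on the nose.

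The main obstacle is purely bookkeeping: none of the steps is deep, but one must carefully align the multiplicative structure of the $(c_{ij})^r$ with the additive structure of $p$-ary expansions, using Lucas' theorem as the bridge. The cleanest route is to isolate the single-layer computation $X_i^{r_i} = r_i!(c_{ij})^{r_i p^i}$ first, and only then combine layers, so that the two uses of Lucas (one for vanishing, one for non-vanishing with value $1$) never get tangled.
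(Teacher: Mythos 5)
Your argument is correct and complete; since the paper itself defers this proof to Proposition 1.2 of \cite{SFB} (and section 12.3 of \cite{MyDissertation}), there is no in-text proof to diverge from, and your route --- deriving everything from the fundamental relation of Proposition \ref{GaFundamentalRelationProp} together with the two cases of Corollary \ref{Lucas'Cor} --- is exactly the standard argument those references give. In particular, your isolation of the single-layer identity $X_i^{r_i}=r_i!\,(c_{ij})^{r_i p^i}$ before combining carry-free layers, and the split on whether $r+s$ carries in part 4, are precisely the intended steps.
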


\begin{proof}
See the proof of Proposition 1.2 of \cite{SFB}, or section 12.3 of
\cite{MyDissertation}.

\end{proof}


\section{The Heisenberg Group}

For a representation of the Heisenberg group $H_1$ over a field
$k$ and $3$-tuple $(r,s,t)$ of non-negative integers, we again
define the matrix $(c_{ij})^{(r,s,t)}$ as the matrix of
coefficients of the monomial $x^r y^s z^t$.  For example, for the
representation
\[
\left(%
\begin{array}{cccccc}
  1 & 2x & x & 2x^2 & z & 2xz \\
   & 1 & 0 & x & 0 & z \\
   &  & 1 & 2x & y & 2xy \\
   &  &  & 1 & 0 & y \\
   &  &  &  & 1 & 2x \\
   &  &  &  &  & 1 \\
\end{array}%
\right)
\]
we define
\[ (c_{ij})^{(0,0,0)} = \text{Id}, \hspace{.5cm}
(c_{ij})^{(1,0,1)} =\left(
\begin{array}{cccccc}
  0 & 0 & 0 & 0 & 0 & 2 \\
   & 0 & 0 & 0 & 0 & 0 \\
   &  & 0 & 0 & 0 & 0 \\
   &  &  & 0 & 0 & 0 \\
   &  &  &  & 0 & 0 \\
   &  &  &  &  & 0 \\
\end{array}%
\right), \hspace{.5cm} (c_{ij})^{(1,1,1)} = 0
\] and so forth.  In what follows we adopt the notation, for $3$-tuples $\vec{r}$
and $\vec{s}$, $\vec{r} + \vec{s} \stackrel{\text{def}}{=}
(r_1+s_1,r_2+s_2,r_3+s_3)$.

Our first step is to work out the `fundamental relation' for
$H_1$.

\begin{proposition}
\label{H1FundamentalRelationProp} Let $k$ be any field.  A
collection $(c_{ij})^{\vec{r}}$ of matrices over $k$ defines a
representation of $H_1$ if and only if they are zero for all but
finitely many $\vec{r}$, satisfy $(c_{ij})^{(0,0,0)} =
\text{\emph{Id}}$, and for all $3$-tuples $\vec{s}$ and $\vec{t}$
\begin{equation}
\label{H1FundamentalRelation} (c_{ij})^{\vec{s}}(c_{ij})^{\vec{t}}
= \sum_{l=0}^{\text{min}(s_1,t_2)}{ s_1 + t_1 -l \choose t_1}
{s_2+t_2-l \choose s_2}{s_3 + t_3 + l \choose s_3, t_3, l}
(c_{ij})^{\vec{s} + \vec{t} + (-l,-l,l)}
\end{equation}

\end{proposition}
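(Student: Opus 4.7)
The plan is to extract both conditions directly from the two comodule diagrams (\ref{comodDiagram1}) and (\ref{comodDiagram2}), exactly as in the proof of Proposition \ref{GaFundamentalRelationProp} for $G_a$. Writing $a_{ij} = \sum_{\vec{r}} (c_{ij})^{\vec{r}} x^{r_1} y^{r_2} z^{r_3}$, the counit axiom $\varepsilon(a_{ij}) = \delta_{ij}$ immediately forces $(c_{ij})^{(0,0,0)} = \text{Id}$, and the finiteness condition on $(c_{ij})^{\vec{r}}$ reflects the fact that $a_{ij}$ is a polynomial in $k[x,y,z]$. All the content is therefore in the coassociativity axiom $\Delta(a_{ij}) = \sum_k a_{ik} \otimes a_{kj}$.

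The first step is to compute $\Delta(x^a y^b z^c) = \Delta(x)^a \Delta(y)^b \Delta(z)^c$ in the commutative algebra $A \otimes A$. The binomial expansions of $\Delta(x)^a$ and $\Delta(y)^b$ are standard, while the nontrivial piece is
\[\Delta(z)^c = (1 \otimes z + x \otimes y + z \otimes 1)^c = \sum_{p+q+l = c} \binom{c}{p,q,l}\, x^q z^l \otimes y^q z^p,\]
by the multinomial theorem. Multiplying the three factors and collecting gives
\[\Delta(x^a y^b z^c) = \sum_{i,j,p,q,l} \binom{a}{i}\binom{b}{j}\binom{c}{p,q,l}\, x^{i+q} y^j z^l \otimes x^{a-i} y^{b-j+q} z^p.\]

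The second step is to equate coefficients of the basis monomial $x^{s_1} y^{s_2} z^{s_3} \otimes x^{t_1} y^{t_2} z^{t_3}$ on both sides of the coassociativity equation, using the substitution $l = q$, $i = s_1 - l$, $j = s_2$, $p = t_3$, giving $a = s_1+t_1-l$, $b = s_2+t_2-l$, $c = s_3+t_3+l$. The constraints $i \ge 0$ and $b - j + q \le b$ translate to $l \le s_1$ and $l \le t_2$, pinning down the range $0 \le l \le \min(s_1,t_2)$; the binomial factors $\binom{a}{i}$, $\binom{b}{j}$ become $\binom{s_1+t_1-l}{t_1}$ and $\binom{s_2+t_2-l}{s_2}$ after the standard symmetry, and the multinomial becomes $\binom{s_3+t_3+l}{s_3,t_3,l}$, yielding exactly relation (\ref{H1FundamentalRelation}). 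Both directions of the biconditional are covered by this same computation, since each step is reversible.

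The main obstacle is purely bookkeeping: the nontrivial coproduct $\Delta(z) = 1 \otimes z + x \otimes y + z \otimes 1$ couples the $z$-grading to the $x$- and $y$-gradings, so the exponent indexing on the right of the coassociativity equation is shifted by $(-l,-l,l)$ rather than being a straightforward convolution as in the $G_a$ case. Keeping track of which multinomial index becomes which variable under the change of variables, and verifying the bounds on $l$, is the only real point requiring care.
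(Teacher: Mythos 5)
Your proposal is correct and takes essentially the same route as the paper: expand $\Delta(x^a y^b z^c)$ using the multinomial theorem on $\Delta(z)$, match coefficients of the basis of monomial tensors, and observe that the solutions are parameterized by the single free index counting the $x\otimes y$ factors, which yields exactly the range $0\le l\le\min(s_1,t_2)$ and the stated coefficients. One small slip in the bookkeeping: the bound $l\le t_2$ comes from requiring the binomial index in $\Delta(y)^b$ to satisfy $b-j\ge 0$ (i.e., $s_2\le s_2+t_2-l$), not from the inequality ``$b-j+q\le b$'' as written, which would instead give $l\le s_2$; the conclusion you state is nonetheless the correct one.
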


\begin{proof}
The first statement is immediate since the representation is
algebraic, and the second follows from equation
\ref{comodEquation2}, namely $\varepsilon(a_{ij}) = \delta_{ij}$.
For the third we examine equation \ref{comodEquation1}, namely
$\Delta(a_{ij}) = \sum_k a_{ik} \otimes a_{kj}$:
\begin{equation*}
\begin{split}
\Delta(a_{ij}) &= \Delta\left(\sum_{\vec{r}} c_{ij}^{\vec{r}}
x^{r_1} y^{r_2} z^{r_3} \right)
= \sum_{\vec{r}} c_{ij}^{\vec{r}} \Delta(x)^{r_1} \Delta(y)^{r_2} \Delta(z)^{r_3} \\
&= \sum_{\vec{r}} c_{ij}^{\vec{r}}(x \otimes 1 + 1 \otimes
x)^{r_1}(y \otimes 1 + 1 \otimes y)^{r_2} (z \otimes 1 + x \otimes
y + 1 \otimes z)^{r_3} \\
&= \sum_{\vec{r}} c_{ij}^{\vec{r}} \left[ \left(\sum_{k_1+l_1 =
r_1} {k_1 + l_1 \choose k_1} x^{k_1} \otimes x^{l_1}\right)
\left(\sum_{k_2+l_2 = r_2} {k_2
+ l_2 \choose k_2} y^{k_2} \otimes y^{l_2}\right)\right. \\
& \hspace{1.7cm} \left. \left(\sum_{k_3+l_3+m_3 = r_3}
{k_3+l_3+m_3 \choose k_3, l_3, m_3} x^{l_3} z^{k_3} \otimes
y^{l_3} z^{m_3}\right)\right] \\
&= \sum_{\vec{r}} c_{ij}^{\vec{r}} \sum_{{k_1 + l_1 = r_1} \atop
{{k_2 + l_2 = r_2} \atop {k_3 + l_3 + m_3 = r_3}}} \left[{k_1 +
l_1 \choose k_1} {k_2 + l_2 \choose k_2} {k_3+l_3+m_3 \choose
k_3,l_3,m_3} \right. \\ & \hspace{4cm}\left. x^{k_1 + l_3} y^{k_2}
z^{k_3} \otimes x^{l_1}
y^{l_2+l_3} z^{m_3}\right] \\
\end{split}
\end{equation*}
We seek to write this expression as a sum over distinct monomial
tensors, i.e.~in the form
\[ \sum_{\vec{s},\vec{t}}
\chi\left(\vec{s},\vec{t}\right) x^{s_1} y^{s_2} z^{s_3} \otimes
x^{t_1} y^{t_2} z^{t_3} \] where the summation runs over all
possible pairs of $3$-tuples and
$\chi\left(\vec{r},\vec{s}\right)$ is a scalar for each such pair.
Thus, for fixed $\vec{s}$ and $\vec{t}$ we seek non-negative
integer solutions to the system of equations
\[
\begin{array}{cc}
  k_1 + l_3 = s_1 & l_1 = t_1 \\
  k_2 = s_2 & l_2 + l_3 = t_2 \\
  k_3 = s_3 & m_3 = t_3 \\
\end{array}
\]
Once one chooses $l_3$ all other values are determined, so we
parameterize by $l_3$.  For fixed $l_3 = l$, its contribution to
the coefficient $\chi\left(\vec{s},\vec{t}\right)$ is
\[{s_1 + t_1 - l \choose t_1}{s_2+t_2-l \choose
s_2}{s_3+t_3+l \choose s_3,t_3,l}
c_{ij}^{(s_1+t_1-l,s_2+t_2-l,s_3+t_3+l)} \] and in order for such
an $l$ to induce a solution, it is necessary and sufficient that
it be no larger than either $s_1$ or $t_2$, whence we can sum the
above expression over all $0 \leq l \leq \text{min}(s_1,t_2)$ to
obtain
\[ \chi\left(\vec{s},\vec{t}\right) = \sum_{l=0}^{\text{min}(s_1,t_2)} {s_1 + t_1 - l \choose t_1}{s_2+t_2-l \choose
s_2}{s_3+t_3+l \choose s_3,t_3,l} c_{ij}^{\vec{s} + \vec{t} +
(-l,-l,l)} \] As for the right hand side of equation
\ref{comodEquation1}, one easily computes that
\[ \sum_k a_{ik} \otimes a_{kj} = \sum_{\vec{s},\vec{t}}
\left(\sum_k c_{ik}^{\vec{s}} c_{kj}^{\vec{t}} \right) x^{\vec{s}}
\otimes x^{\vec{t}} \] and upon matching coefficients for the
basis of monomial tensors we have
\[ \sum_k c_{ik}^{\vec{s}} c_{kj}^{\vec{t}} = \sum_{l=0}^{\text{min}(s_1,t_2)} {s_1 + t_1 - l \choose t_1}{s_2+t_2-l \choose
s_2}{s_3+t_3+l \choose s_3,t_3,l} c_{ij}^{\vec{s} + \vec{t} +
(-l,-l,l)} \] for every $\vec{s},\vec{t},i$ and $j$, i.e.
\[ (c_{ij})^{\vec{s}} (c_{ij})^{\vec{t}} = \sum_{l=0}^{\text{min}(s_1,t_2)} {s_1 + t_1 - l \choose t_1}{s_2+t_2-l \choose
s_2}{s_3+t_3+l \choose s_3,t_3,l} (c_{ij})^{\vec{s} + \vec{t} +
(-l,-l,l)} \] for every $\vec{s}$ and $\vec{t}$.
\end{proof}

Let $k$ have characteristic $p>0$, and $(a_{ij})$ a representation
of $H_1$ over $k$ given by the matrices $(c_{ij})^{\vec{r}}$,
$\vec{r} \in \mathbb{N}^3$. For a non-negative integer $m$, define
\begin{equation*}
\begin{split}
X_m &\stackrel{\text{def}}{=} (c_{ij})^{(p^m,0,0)} \\
Y_m &\stackrel{\text{def}}{=} (c_{ij})^{(0,p^m,0)} \\
Z_m &\stackrel{\text{def}}{=} (c_{ij})^{(0,0,p^m)} \\
\end{split}
\end{equation*}
Define also
\begin{equation*}
\begin{split}
X_{(m)} \stackrel{\text{def}}{=} (c_{ij})^{(m,0,0)} \\
Y_{(m)} \stackrel{\text{def}}{=} (c_{ij})^{(0,m,0)} \\
Z_{(m)} \stackrel{\text{def}}{=} (c_{ij})^{(0,0,m)} \\
\end{split}
\end{equation*}

Note that $H_1$ contains three copies of the additive group,
namely those matrices of the form
\[
\left(%
\begin{array}{ccc}
  1 & x & 0 \\
   & 1 & 0 \\
   &  & 1 \\
\end{array}%
\right),
\left(%
\begin{array}{ccc}
  1 & 0 & 0 \\
   & 1 & y \\
   &  & 1 \\
\end{array}%
\right) \text{ and }
\left(%
\begin{array}{ccc}
  1 & 0 & z \\
   & 1 & 0 \\
   &  & 1 \\
\end{array}%
\right)
\]
Then by propositions \ref{GaFundamentalRelationProp} and
\ref{GaCharpLemma1}, the following must hold for all $r$ and $s$:
\begin{equation}
\label{GaToH1Equation}
\begin{split}
X_{(r)} X_{(s)} &= {r + s \choose r} X_{(r+s)} \\
X_r^p &= 0 \\
X_r X_s &= X_s X_r \\
X_{(r)} &= \Gamma(r)^{-1} X_0^{r_0} \ldots X_m^{r_m} \\
\end{split}
\end{equation}
Identical statements hold if we replace $X$ with $Y$ or $Z$.

As mentioned in the introduction, there is no reason to suspect
that modules for $H_1$ in characteristic $p>0$ bear any
resemblance to modules for $H_1^\infty$ in characteristic zero,
unless $p$ is large enough with respect to dimension.

\begin{lemma}
\label{H1remark} Suppose that $p$ is greater than or equal to
twice the dimension of a representation, and that the sum $r + s$
carries. Then at least one of $P_{(r)}$ or $Q_{(s)}$ must be zero,
where $P$ and $Q$ can be any of $X$, $Y$ or $Z$.
\end{lemma}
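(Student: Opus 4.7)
The plan is to exploit the structural formulas recorded in display (\ref{GaToH1Equation}), which were obtained by restricting the $H_1$-representation to the three copies of $G_a$ it contains. For any $P\in\{X,Y,Z\}$ and any non-negative integer $r$ with $p$-ary expansion $r=r_0+r_1p+\cdots+r_m p^m$, we have
\[
P_{(r)} = \Gamma(r)^{-1}\, P_0^{r_0} P_1^{r_1} \cdots P_m^{r_m},
\]
the $P_i$ commute pairwise, and $\Gamma(r)=r_0!\cdots r_m!$ is invertible mod $p$ because each digit $r_i$ is strictly less than $p$.

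The key numerical observation is that, since each $P_i$ is a $d\times d$ matrix that is nilpotent (in fact $p$-nilpotent, but that is weaker than what we need), Cayley--Hamilton gives $P_i^d = 0$. So whenever a single digit $r_i$ satisfies $r_i \ge d$, the commuting product above vanishes and $P_{(r)} = 0$. The same applies, letter for letter, to $Q_{(s)}$.

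Now I would finish by pigeonholing on the carry. To say that $r+s$ carries is to say that $r_i + s_i \ge p$ for some index $i$. Combined with the hypothesis $p\ge 2d$, this forces $r_i + s_i \ge 2d$, so at least one of $r_i\ge d$ or $s_i\ge d$ must hold. In the former case the preceding paragraph gives $P_{(r)}=0$; in the latter it gives $Q_{(s)}=0$. Since $P$ and $Q$ were arbitrary elements of $\{X,Y,Z\}$, the lemma follows.

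There is no real obstacle here: the hypothesis $p\ge 2d$ is designed precisely to make the pigeonhole argument work, and the heavy lifting (the product formula and nilpotency of the $P_i$) has already been done in Proposition \ref{GaCharpLemma1} applied to the three $G_a$-subgroups. The only point to be mindful of is that $\Gamma(r)$ is indeed invertible in $k$ under our hypotheses, which is immediate because each $p$-digit is smaller than $p$.
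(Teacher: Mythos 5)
Your proof is correct and follows essentially the same route as the paper: both arguments invoke the product formula $P_{(r)} = \Gamma(r)^{-1}P_0^{r_0}\cdots P_m^{r_m}$, the fact that each nilpotent $d\times d$ factor satisfies $P_i^{d}=0$, and the pigeonhole observation that a carry $r_i+s_i\geq p\geq 2d$ forces one of $r_i\geq d$ or $s_i\geq d$. Your write-up is slightly more explicit than the paper's (e.g.~in noting the invertibility of $\Gamma(r)$), but the substance is identical.
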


\begin{proof}
Since the $X_i$, $Y_i$, and $Z_i$ are all nilpotent, they are
nilpotent of order less than or equal to the the dimension of the
representation, which we assume is no greater than $p/2$. Since
the sum $r + s$ carries, we have $r_i + s_i \geq p$ for some $i$,
whence, say, $r_i \geq p/2$.  Then
\[ P_{(r)} = \Gamma(r)^{-1}P_0^{r_0} \ldots P_i^{r_i} \ldots
P_i^{r_m} \] is zero, since $P_i^{r_i}$ is.

\end{proof}

\begin{proposition}
\label{H1CharpLargeThm1} Suppose $p$ is greater than or equal to
twice the dimension of a representation. Then the following
relations hold:

\begin{enumerate}
\item{$[Z_n,X_m] = [Z_n,Y_m] = 0$ for every $n$ and $m$}

\item{$[X_m,Y_m] = Z_m$ for every $m$}

\item{$[X_n,Y_m] = 0$ for every $n \neq m$}
\end{enumerate}

\end{proposition}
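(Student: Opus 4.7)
The plan is to apply the fundamental relation \ref{H1FundamentalRelation} to judiciously chosen pairs $(\vec{s}, \vec{t})$ so as to expand each commutator $[P_n, Q_m]$ (with $P, Q \in \{X, Y, Z\}$) in terms of the basis matrices $(c_{ij})^{\vec{r}}$, and then invoke Lemma \ref{H1remark} to annihilate the unwanted terms via carry arguments.

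For (1), the computation collapses immediately. Taking $\vec{s} = (0, 0, p^n)$ and $\vec{t} = (p^m, 0, 0)$ gives $\min(s_1, t_2) = 0$, so the sum in \ref{H1FundamentalRelation} reduces to the single term $l = 0$ with coefficient $1$, yielding $Z_n X_m = (c_{ij})^{(p^m, 0, p^n)}$. Reversing the order produces the same matrix, so $[Z_n, X_m] = 0$. The argument for $[Z_n, Y_m] = 0$ is identical with $Y$ in place of $X$; no appeal to Lemma \ref{H1remark} is required here.

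For (2), take $\vec{s} = (p^m, 0, 0)$ and $\vec{t} = (0, p^m, 0)$; now $\min(s_1, t_2) = p^m$ and the sum is nontrivial, giving $X_m Y_m = \sum_{l=0}^{p^m}(c_{ij})^{(p^m - l,\, p^m - l,\, l)}$ (every coefficient turns out to be $1$). The reverse product $Y_m X_m$ collapses to the single term $(c_{ij})^{(p^m, p^m, 0)}$, so $[X_m, Y_m] = \sum_{l=1}^{p^m}(c_{ij})^{(p^m - l,\, p^m - l,\, l)}$. The $l = p^m$ term is exactly $Z_m$, so it remains to show every intermediate term vanishes. Two further applications of the fundamental relation produce the factorization $(c_{ij})^{(p^m - l,\, p^m - l,\, l)} = Y_{(p^m - l)} X_{(p^m - l)} Z_{(l)}$. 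For $1 \leq l \leq p^m - 1$ the $p$-ary sum $(p^m - l) + l = p^m$ carries (since the only nonzero digit of $p^m$ sits at the top position), so Lemma \ref{H1remark} applied to $X_{(p^m - l)}$ and $Z_{(l)}$ forces one of these factors to vanish.

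Part (3) is analogous. Without loss of generality assume $n < m$; the case $n > m$ is handled by a symmetric argument with the roles of $X$ and $Y$ swapped. Expanding $X_n Y_m$ and $Y_m X_n$ in the same way yields $[X_n, Y_m] = \sum_{l=1}^{p^n}(c_{ij})^{(p^n - l,\, p^m - l,\, l)}$, with each summand factoring as $Y_{(p^m - l)} X_{(p^n - l)} Z_{(l)}$. For $1 \leq l \leq p^n - 1$ the sum $(p^n - l) + l = p^n$ carries, killing $X_{(p^n - l)}$ or $Z_{(l)}$. The boundary case $l = p^n$, where $p^n - l = 0$ and so the first carry argument degenerates, requires a separate carry: since $p^m - p^n$ has digit $p - 1$ in position $n$, the sum $(p^m - p^n) + p^n = p^m$ carries, killing $Y_{(p^m - p^n)}$ or $Z_{(p^n)}$. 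The main obstacle is locating the correct three-factor decomposition of each unwanted $(c_{ij})^{\vec{r}}$ and identifying which pair of factors triggers a carry; once these are in place, Lemma \ref{H1remark} disposes of every term.
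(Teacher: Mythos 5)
Your proof is correct and takes essentially the same route as the paper's: expand each product via the fundamental relation \ref{H1FundamentalRelation}, factor the surviving cross terms as products of $Z_{(\cdot)}$, $Y_{(\cdot)}$, $X_{(\cdot)}$, and annihilate them with the carrying argument of Lemma \ref{H1remark}. The only differences are cosmetic --- the order of the factors in the decomposition, and your separate treatment of the boundary term $l=p^n$ in part (3), which the paper handles uniformly by using the carry in $(p^m-l)+l=p^m$ for every $l$ in the range.
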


\begin{proof}
To prove (2), consider equation \ref{H1FundamentalRelation}
applied to $X_m Y_m$:
\begin{equation*}
\begin{split}
X_m Y_m &=  (c_{ij})^{(p^m,0,0)} (c_{ij})^{(0,p^m,0)} \\
 &= Y_m X_m +
\left(\sum_{l=1}^{p^m-1} {p^m -l \choose 0}{p^m - l \choose 0} {l
\choose l} Z_{(l)} Y_{(p^m-l)} X_{(p^m-l)} \right) + Z_m \\
\end{split}
\end{equation*}
For every $0 < l < p^m$ there is clearly some carrying in
computing the sum $(p^m -l) + l$, so by lemma \ref{H1remark} the
summation term $Z_{(l)}Y_{(p^m-l)}X_{(p^m-l)}$ is always zero,
since at least one of $Z_{(l)}$ or $Y_{(p^m-l)}$ is zero. This
gives $Z_m = [X_m,Y_m]$ as claimed.

To prove (3), consider equation \ref{H1FundamentalRelation}
applied to $X_m Y_n$ for $m \neq n$:

\begin{equation*}
\begin{split}
X_m Y_n &= (c_{ij})^{(p^m,0,0)}(c_{ij})^{(0,p^n,0)} \\
&= Y_n X_m + \left(\sum_{l=1}^{\text{min}(p^n,p^m)} {p^m -l
\choose 0}{p^n - l \choose 0} {l \choose l} Z_{(l)} Y_{(p^n-l)}
X_{(p^m-l)} \right)
\end{split}
\end{equation*}
In case $m < n$, for every value of $l$ in the above summation,
the sum $(p^n -l) + l$ carries, forcing at least one of $Z_{(l)}$
or $Y_{(p^n-l)}$ to be zero, forcing every term in the summation
to be zero. A similar statement holds in case $n < m$. This proves
$X_m Y_n = Y_n X_m$, as claimed.

(1) is in fact true without any hypothesis on the characteristic.
To prove it, apply equation \ref{H1FundamentalRelation} to $X_n
Z_m$ and $Z_m X_n$, for which you get the same answer, and the
same can be done to show $[Y_m,Z_n] = 0$.

\end{proof}

We have shown thus far that, if $p \geq 2d$, every $d$-dimensional
representation in characteristic $p$ is given by a finite sequence
$X_i,Y_i,Z_i$ of $d \times d$ matrices over $k$ satisfying

\begin{enumerate}
\item{The $X_i$, $Y_i$, and $Z_i$ are all nilpotent} \item{$Z_i
=[X_i,Y_i]$ for every $i$} \item{$[X_i,Z_i] = [Y_i,Z_i] = 0$ for
every $i$} \item{For every $i \neq j$, $X_i,Y_i,Z_i$ all commute
with $X_j,Y_j,Z_j$}
\end{enumerate}

We now show sufficiency of these relations.

\begin{lemma}
\label{H1MainThmLemma} Let $k$ have characteristic $p>0$, and let
$X,Y$ and $Z$ be $p$-nilpotent matrices over $k$ satisfying
$Z=[X,Y]$ and $[Z,X] = [Z,Y] = 0$.  Then for $0 \leq m,n < p$,
\[X^nY^m = \sum_{l=0}^{\text{min}(n,m)}l!{n \choose l}{m \choose l}Z^lY^{m-l} X^{n-l}\]
\end{lemma}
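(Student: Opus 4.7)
The plan is to induct on $n$, with base case $n=0$ immediate (the right-hand sum collapses to the single $l=0$ term $Y^m$). Before attempting the inductive step, the key preliminary is to establish the $n=1$ case in isolation: namely $XY^m = Y^m X + m Y^{m-1} Z$. This follows by a short induction on $m$ using only the two given relations $XY = YX + Z$ and $[Y,Z]=0$; at each stage one factor of $X$ is pushed past a $Y$ at the cost of a $Z$, and the $Z$ so produced commutes freely with the remaining $Y$'s.

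For the inductive step, write $X^{n+1}Y^m = X \cdot X^n Y^m$ and substitute the inductive hypothesis. Because $[X,Z]=0$, the leading $X$ passes through each factor $Z^l$ unchanged, so the computation reduces to applying the $n=1$ identity to $X Y^{m-l}$ inside each summand. This splits every term into two: one contribution $l!\binom{n}{l}\binom{m}{l} Z^l Y^{m-l} X^{n+1-l}$ at the same $Z$-level, and another $(m-l)\, l!\binom{n}{l}\binom{m}{l} Z^{l+1} Y^{m-l-1} X^{n-l}$ at one level higher. After reindexing the second sum via $l \mapsto l-1$, the total coefficient of $Z^l Y^{m-l} X^{n+1-l}$ is
\[
l!\binom{n}{l}\binom{m}{l} \;+\; (l-1)!\binom{n}{l-1}\binom{m}{l-1}(m-l+1).
\]
The identity $(m-l+1)\binom{m}{l-1} = l\binom{m}{l}$ collapses the second summand to $l!\binom{n}{l-1}\binom{m}{l}$, whereupon Pascal's identity $\binom{n}{l}+\binom{n}{l-1}=\binom{n+1}{l}$ delivers exactly $l!\binom{n+1}{l}\binom{m}{l}$, completing the induction.

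The main obstacle is purely bookkeeping: correctly shifting the summation index and verifying the binomial identity above. There is no genuine characteristic-theoretic obstruction, since the argument is an identity of integer-coefficient combinations of matrix products; the $p$-nilpotence of $X,Y,Z$ and the range $0 \le n,m < p$ play no role in the manipulation and simply reflect the context in which this lemma will subsequently be used. One could alternatively regard the formula as a shadow of Proposition \ref{H1FundamentalRelationProp} applied to the case $\vec{s}=(n,0,0)$, $\vec{t}=(0,m,0)$, but the direct inductive route above is the most self-contained.
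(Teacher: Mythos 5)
Your proof is correct, and it is in the same elementary spirit as the paper's, but organized as a mirror image of it. The paper runs a double induction: it first proves the $m=1$ identity $X^nY = YX^n + nZX^{n-1}$ by induction on $n$, then peels a $Y$ off the \emph{right} via $X^nY^m = (X^nY)Y^{m-1}$, which forces it to invoke the inductive hypothesis at two prior points $(n,m-1)$ and $(n-1,m-1)$ and to split into the cases $m\le n$ and $n\le m$ (the latter ``left to the reader''). You instead prove the $n=1$ identity $XY^m = Y^mX + mY^{m-1}Z$ by induction on $m$ and then peel an $X$ off the \emph{left}, so your main induction is on the single variable $n$, uses the hypothesis at only one prior point, requires no case split, and ends with the same Pascal-type bookkeeping; your identity $(m-l+1){m \choose l-1} = l{m \choose l}$ and the reindexing check out (with the usual convention ${n \choose l}=0$ for $l>n$ taking care of the summation limits at the top end). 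Your observation that the identity is characteristic-free, with $p$-nilpotence and the bound $n,m<p$ irrelevant to the algebra, is right and matches the paper's own remark following the lemma. One caution about your closing aside: deriving the lemma from Proposition \ref{H1FundamentalRelationProp} with $\vec{s}=(n,0,0)$, $\vec{t}=(0,m,0)$ would be circular in the paper's logic, since that proposition only applies to matrices already known to form a representation, whereas the lemma is used precisely to verify the fundamental relation for a proposed collection of matrices; your direct induction is the right route.
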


Remark: This result is in fact true when $\text{char}(k) = 0$,
which is the key fact which can be used to prove part (2) of
theorem \ref{GaandH1CharZeroThm}.

\begin{proof}
We proceed by a double induction on $n$ and $m$.  If $n$ or $m$ is
zero the result is trivial, and if $n=m=1$ the equation is $XY =
YX + Z$, which is true by assumption.  Consider then $X^nY$, and
by induction suppose that $X^{n-1}Y = YX^{n-1} + (n-1)ZX^{n-2}$.
Then using the relation $XY = Z+YX$ and $X$ commuting with $Z$ we
have
\begin{equation*}
\begin{split}
 X^nY &= X^{n-1}XY \\ &
 = X^{n-1}(Z+YX) \\
 &= ZX^{n-1} + (X^{n-1}Y)X \\
 &= ZX^{n-1} + (YX^{n-1} + (n-1)ZX^{n-2})X \\
 &= nZX^{n-1} + YX^n
\end{split}
\end{equation*}
 and
so the equation is true when $m=1$.  Now suppose that $m \leq n$,
so that $\text{min}(n,m) = m$.  Then
\begin{equation*}
\begin{split}
 X^n Y^m &= (X^nY)Y^{m-1} \\
 &= (YX^n + nZX^{n-1})Y^{m-1} \\
 &= Y(X^nY^{m-1}) + nZ(X^{n-1}Y^{m-1})
\end{split}
\end{equation*}
which by induction is equal to
\begin{equation*}
\begin{split}
&= Y \left(\sum_{l=0}^{m-1} l!{n \choose l}{m-1 \choose
l}Z^lY^{m-1-l}X^{n-l} \right) \\
& \qquad+ n Z \left(\sum_{l=0}^{m-1}l!{n-1
\choose l}{m-1 \choose l}Z^lY^{m-1-l}X^{n-1-l} \right) \\
&= \sum_{l=0}^{m-1} l! {n \choose l}{m-1 \choose
l}Z^lY^{m-l}X^{n-l} \\
& \qquad+ \sum_{l=0}^{m-1} n l!{n-1 \choose l}{m-1
\choose l} Z^{l+1}Y^{m-1-l}X^{n-1-l} \\
&= Y^mX^n + \sum_{l=1}^{m-1} l!{n \choose l}{m-1 \choose l}Z^l
Y^{m-l}X^{n-l} \\
& \qquad + \sum_{l=1}^m n(l-1)!{n-1 \choose l-1}{m-1 \choose
l-1}Z^l Y^{m-l}X^{n-l}
\end{split}
\end{equation*}
where, in the last step, we have chopped off the first term of the
first summation and shifted the index $l$ of the second summation.
If we chop off the last term of the second summation we obtain
\begin{equation*}
\begin{split}
 = Y^mX^n &+ \sum_{l=1}^{m-1} l!{n \choose l}{m-1 \choose l}Z^l
Y^{m-l}X^{n-l} \\
&+ \sum_{l=1}^{m-1} n(l-1)!{n-1 \choose l-1}{m-1 \choose l-1}Z^l
Y^{m-l}X^{n-l} \\
&+ n(m-1)!{n-1 \choose m-1}{m-1 \choose m-1}Z^m X^{n-m}
\end{split}
\end{equation*}
and upon merging the summations, we have
\begin{equation*}
\begin{split}
 &= Y^m X^n + \sum_{l=1}^{m-1} \left[l!{n \choose l}{m-1 \choose
l} + n(l-1)!{n-1 \choose l-1}{m-1 \choose l-1} \right] Z^l Y^{m-l}
X^{n-l} \\
& \qquad + n(m-1)!{n-1 \choose m-1}{m-1 \choose m-1} Z^mX^{n-m} \\
&= Y^m X^n + \sum_{l=1}^{m-1} \left[l!{n \choose l}{m-1 \choose l}
+ n(l-1)!{n-1 \choose l-1}{m-1 \choose l-1} \right] Z^l Y^{m-1}
X^{n-l} \\
& \qquad + m!{n \choose m}{m \choose m} Z^mX^{n-m} \\
&= Y^m X^n + \sum_{l=1}^{m-1}\left[l!{n \choose l}{m \choose l}
\right] Z^l Y^{m-l} X^{n-l} \\
& \qquad + n!{n \choose m}{m \choose m} Z^m X^{n-m} \\
&= \sum_{l=0}^m l!{n \choose l}{m \choose l} Z^l Y^{m-l} X^{n-l}
\end{split}
\end{equation*}
This proves the case of $m \leq n$, and the case of $n \leq m$ is
hardly any different, and left to the reader.

\end{proof}

\begin{theorem}
\label{H1CharpLargeThm2} Suppose $p \geq 2d$.  Let $X_i$, $Y_i$
and $Z_i$ be a finite sequence of $d \times d$ matrices over a
field $k$ of characteristic $p$ satisfying

\begin{enumerate}
\item{The $X_i$, $Y_i$, and $Z_i$ are all nilpotent} \item{$Z_i
=[X_i,Y_i]$ for every $i$} \item{$[X_i,Z_i] = [Y_i,Z_i] = 0$ for
every $i$} \item{For every $i \neq j$, $X_i,Y_i,Z_i$ all commute
with $X_j,Y_j,Z_j$}

\end{enumerate}
Let $n = n_m p^m + n_{m-1} p^{m-1} + \ldots + n_1p+n_0$, and
assign
\[ X_{(n)} = \Gamma(n)^{-1} X_0^{n_0} \ldots X_m^{n_m} \]
and similarly for $Y_{(n)}$ and $Z_{(n)}$.  Set
\[ (c_{ij})^{(n,m,k)} = Z_{(k)}Y_{(m)}X_{(n)} \]
Then these assignments define a valid $d$-dimensional
representation of $H_1$ over $k$.

\end{theorem}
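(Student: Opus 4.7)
The plan is to appeal to Proposition \ref{H1FundamentalRelationProp}. Finite support of the $(c_{ij})^{\vec{r}}$ is immediate from the nilpotency hypothesis (1), and $(c_{ij})^{(0,0,0)} = \mathrm{Id}$ follows from $X_{(0)} = Y_{(0)} = Z_{(0)} = \mathrm{Id}$. What remains is to verify the fundamental relation (\ref{H1FundamentalRelation}).

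The central technical step I would isolate is a global commutation identity
\[ X_{(n)}\, Y_{(m)} \;=\; \sum_{l=0}^{\min(n,m)} Z_{(l)}\, Y_{(m-l)}\, X_{(n-l)}, \]
valid for all non-negative integers $n, m$. To prove it, I write $n = \sum n_i p^i$ and $m = \sum m_i p^i$, use hypothesis (4) to rearrange $\prod_i X_i^{n_i} \cdot \prod_j Y_j^{m_j}$ as $\prod_i X_i^{n_i} Y_i^{m_i}$, and apply Lemma \ref{H1MainThmLemma} at each digit $i$ (each $n_i, m_i < p$, and hypotheses (2) and (3) supply the necessary commutator structure). Reassembling the resulting $Z_i^{l_i}, Y_i^{m_i - l_i}, X_i^{n_i - l_i}$ into $Z_{(L)}, Y_{(m-L)}, X_{(n-L)}$ for $L = \sum l_i p^i$ produces a sum indexed by digit-tuples $\vec{l}$ with $l_i \leq \min(n_i, m_i)$, and the digit-wise factors $l_i!\binom{n_i}{l_i}\binom{m_i}{l_i}$ from the lemma collapse cleanly against the $\Gamma(\cdot)^{-1}$ normalizations in the definitions of $X_{(\cdot)}, Y_{(\cdot)}, Z_{(\cdot)}$ to leave coefficient $1$ on every term. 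The integer values $l \in [0, \min(n,m)]$ that are \emph{not} of the form $L = \sum l_i p^i$ with $l_i \leq \min(n_i, m_i)$ are precisely those for which at least one of the sums $l + (n-l)$ or $l + (m-l)$ carries in base $p$; here Lemma \ref{H1remark} (this is the point at which $p \geq 2d$ enters) forces one of $Z_{(l)}, X_{(n-l)}, Y_{(m-l)}$ to vanish, so the ``extra'' summands contribute zero and the index set can be enlarged to the full interval.

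Granted this identity, the fundamental relation falls out mechanically. Starting from
\[ Z_{(s_3)}\, Y_{(s_2)}\, X_{(s_1)}\, Z_{(t_3)}\, Y_{(t_2)}\, X_{(t_1)}, \]
I use hypotheses (3) and (4) — which together make every $Z_j$ central in the algebra generated by all the $X_i, Y_i, Z_i$ — to slide $Z_{(t_3)}$ to the far left, then apply the commutation identity to the factor $X_{(s_1)} Y_{(t_2)}$, and finally merge like-typed factors via the additive-group identities $P_{(a)} P_{(b)} = \binom{a+b}{a} P_{(a+b)}$ from (\ref{GaToH1Equation}) for $P = X, Y, Z$. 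The $X$-merge, $Y$-merge, and three-way $Z$-merge produce respectively the factors $\binom{s_1+t_1-l}{t_1}$, $\binom{s_2+t_2-l}{s_2}$, and $\binom{s_3+t_3+l}{s_3,t_3,l}$, yielding exactly the coefficient on the right-hand side of (\ref{H1FundamentalRelation}).

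The main obstacle is the global commutation identity, and within it, the bookkeeping has two delicate parts: the arithmetic cancellation showing the digit-wise lemma coefficients reduce to $1$ after accounting for the $\Gamma$-normalizations, and the Lucas/carrying argument that identifies the natural digit-tuple index set with the integer interval $[0, \min(n,m)]$ modulo vanishing terms. Once that is settled, the rest of the proof is a straightforward rearrangement using already-established $G_a$ relations.
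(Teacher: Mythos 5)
Your proposal is correct and follows essentially the same route as the paper: both reduce the fundamental relation to the single identity $X_{(n)}Y_{(m)} = \sum_{l=0}^{\min(n,m)} Z_{(l)}Y_{(m-l)}X_{(n-l)}$ and establish it digit-by-digit from Lemma \ref{H1MainThmLemma}, with Lemma \ref{H1remark} and the carrying criterion disposing of the values of $l$ not realized by digit-tuples. The only differences are organizational --- the paper inducts on the number of $p$-digits and divides by $\binom{k+t}{t}$ after a case split, whereas you treat all digits at once and merge the three $Z$-factors forward into the multinomial coefficient --- and your identification of the index set $[0,\min(n,m)]$ is, if anything, spelled out more explicitly than in the paper.
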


\begin{proof}
The first two conditions of proposition
\ref{H1FundamentalRelationProp} are immediate.  Then for arbitrary
$n,m,k,r,s,t \in \mathbb{N}$, the equation we must verify is
\begin{gather*}
(c_{ij})^{(n,m,k)}(c_{ij})^{(r,s,t)} \\
= \sum_{l=0}^{\text{min}(n,s)} {n+r-l \choose r}{m+s-l \choose
m}{k+t+l \choose k,t,l} (c_{ij})^{(n+r-l,m+s-l,k+t+l)}
\end{gather*}
 which, with
the given assignments and assumptions, can be written
\begin{gather*}
 Z_{(k)}Z_{(t)}Y_{(m)}X_{(n)} Y_{(s)} X_{(r)} \\
 = \sum_{l=0}^{\text{min}(n,s)} {n+r-l \choose r}{m+s-l \choose
m}{k+t+l \choose k,t,l} Z_{(k+t+l)} Y_{(m+s-l)} X_{(n+r-l)}
\end{gather*}
Equation \ref{GaToH1Equation} gives the identities
\begin{equation*}
\begin{split}
 Z_{(k)} Z_{(t)} &= {k+t \choose t} Z_{(k+t)} \\
 Y_{(m)}Y_{(s-l)}  &= {m+s-l \choose m}Y_{(m+s-l)} \\
X_{(n-l)} X_{(r)}  &= {n+r-l \choose r} X_{(n+r-l)}
\end{split}
\end{equation*}
 so we can
rewrite our equation as
\[ {k+t \choose t} Z_{(k+t)}Y_{(m)}X_{(n)} Y_{(s)} X_{(r)} = \sum_{l=0}^{\text{min}(n,s)} {k+t+l \choose k,t,l} Z_{(k+t+l)}
Y_{(m)}Y_{(s-l)} X_{(n-l)} X_{(r)} \] First suppose that the sum
$k+t$ carries. In this case the equation is true, since the left
hand side
 binomial coefficient vanishes, and the right hand side multinomial coefficient
 vanishes for every $l$, causing both sides to be zero.  We assume
 then that $k+t$ does not carry, so we can divide both sides
 by ${k+t \choose t}$ to yield
 \[ Z_{(k+t)}Y_{(m)}X_{(n)} Y_{(s)} X_{(r)} = \sum_{l=0}^{\text{min}(n,s)} {k+t+l \choose l} Z_{(k+t+l)}
Y_{(m)}Y_{(s-l)} X_{(n-l)} X_{(r)} \] Now apply ${k+t+l \choose
l}Z_{(k+t+l)} = Z_{(k+t)} Z_{(l)}$:
 \[ Z_{(k+t)}Y_{(m)}X_{(n)} Y_{(s)} X_{(r)} = \sum_{l=0}^{\text{min}(n,s)} Z_{(k+t)} Z_{(l)} Y_{(m)}Y_{(s-l)}
X_{(n-l)} X_{(r)} \] We have $Z_{(k+t)}$ in the front and
$X_{(r)}$ in the rear of both sides, so it suffices to show
 \[ Y_{(m)}X_{(n)} Y_{(s)} = \sum_{l=0}^{\text{min}(n,s)} Z_{(l)} Y_{(m)}Y_{(s-l)} X_{(n-l)}
\]
and since $Y_{(m)}$ commutes with $Z_{(l)}$, we can move it to the
front of the right hand side, and then take it off both sides, so
it suffices to show
\begin{equation}
\label{H1charpProofEq} X_{(n)} Y_{(m)} =
\sum_{l=0}^{\text{min}(n,m)} Z_{(l)} Y_{(m-l)} X_{(n-l)}
\end{equation}
where we have replaced $s$ with the more traditional $m$.

Now we begin to replace the $X_{(i)}'s$ with their definitions in
terms of the $X_i's$, similarly for $Y$ and $Z$, so that the left
hand side of equation \ref{H1charpProofEq} is
\[ \left[\Gamma(n) \Gamma(m)\right]^{-1}X_0^{n_0} \ldots
X_k^{n_k}Y_0^{m_0} \ldots Y_k^{m_k} \] and since everything
commutes except $X_i$ and $Y_j$ when $i = j$, we can write
\[ \left[\Gamma(n) \Gamma(m)\right]^{-1}(X_0^{n_0}Y_0^{m_0}) \ldots
(X_k^{n_k}Y_k^{m_k}) \] Moving all coefficients to the right, we
must show
\begin{equation}
\label{lastEquation} (X_0^{n_0}Y_0^{m_0}) \ldots
(X_k^{n_k}Y_k^{m_k}) = \Gamma(n) \Gamma(m)
\sum_{l=0}^{\text{min}(n,m)} Z_{(l)} Y_{(m-l)} X_{(n-l)}
\end{equation}
We proceed by induction on $k$, the maximum number of $p$-digits
of either $m$ or $n$. If $k=0$ the equation is
\begin{equation*}
\begin{split}
X_0^{n_0}Y_0^{m_0} &= n_0! m_0! \sum_{l=0}^{\text{min}(n_0,m_0)}
Z_{(l)} Y_{(m_0-l)} X_{(n_0 -l)} \\
 &= \sum_{l=0}^{\text{min}(n_0,m_0)}
\frac{n_0!m_0!}{(m_0-l)!(n_0-l)!l!} Z_0^l Y_0^{m_0-l} X_0^{n_0-l}
\\
 &= \sum_{l=0}^{\text{min}(n_0,m_0)} l!{ n_0 \choose l}{ m_0 \choose
l}  Z_0^l Y_0^{m_0-l} X_0^{n_0-l}
\end{split}
\end{equation*}
which is true by lemma \ref{H1MainThmLemma} applied to $X_0,Y_0$
and $Z_0$. Now suppose the equation is true when $n$ and $m$ have
no more than $k-1$ digits. Let $n=n_{k-1}p^{k-1} + \ldots + n_0$
and let $n^\prime = n_{k}p^{k} + n_{k-1} p^{k-1} + \ldots + n_0$,
and similarly for $m$. Then by induction we have
\begin{equation*}
\begin{split}
(X_0^{n_0}Y_0^{m_0}) \ldots (X_k^{n_k}Y_k^{m_k}) &=
\left[(X_0^{n_0}Y_0^{m_0}) \ldots
(X_{k-1}^{n_{k-1}}Y_{k-1}^{m_{k-1}})\right](X_{k}^{n_{k}}Y_{k}^{m_{k}})
\\
 &= \left(\Gamma(n) \Gamma(m) \sum_{l=0}^{\text{min}(n,m)} Z_{(l)}
Y_{(m-l)} X_{(n-l)}\right) \\
& \qquad \left( \sum_{l^\prime=0}^{\text{min}(n_{k},m_{k})}
l^\prime!{ n_{k} \choose l^\prime}{ m_{k} \choose l^\prime}
Z_{k}^{l^\prime}
Y_{k}^{m_{k}-l^\prime} X_{k}^{n_{k}-l^\prime} \right) \\
 &= n_k!\Gamma(n)m_k!\Gamma(m) \sum_{l,l^\prime} \left[ \left(\frac{Z_{(l)}
Z_k^{l^\prime}}{l^\prime!} \right) \left(\frac{Y_{(m-l)}
Y_k^{m_k-l^\prime}}{(m_k-l^\prime)!} \right) \right.\\
& \qquad \left. \left(\frac{X_{(n-l)}
X_k^{n_k-l^\prime}}{(n_k-l^\prime)!} \right) \right]
\end{split}
\end{equation*}
Note that these divisions are valid, since for every value of
$l^\prime$ in the summation, $l^\prime \leq m_k,n_k < p$.  Note
also that, since $l \leq p^{k-1}$ and $l^\prime < p$ for all
values of $l,l^\prime$ in the summation, theorem \ref{Lucas'Thm}
gives that ${ l + l^\prime p^k \choose l} = 1$ for all such $l$
and $l^\prime$. For similar reasons we have
${(m-l)+(m_k-l^\prime)p^k \choose m-l} = {(n-l)+(n_k -
l^\prime)p^k \choose n-l} = 1$.  Then we have the identities
\begin{equation*}
\begin{split}
 n_k!\Gamma(n) &= \Gamma(n^\prime) \\
m_k!\Gamma(m)  &= \Gamma(m^\prime) \\
 \frac{Z_{(l)} Z_k^{l^\prime}}{l^\prime!} &= Z_{(l)}Z_{(l^\prime
p^k)} \\
&= {l+l^\prime p^k \choose l} Z_{(l+l^\prime p^k)} \\
&= Z_{(l+l^\prime p^k)} \\
 \frac{Y_{(m-l)} Y_k^{m_k-l^\prime}}{(m_k-l^\prime)!} &=
Y_{(m-l)}Y_{((m_k-l^\prime)p^k)}\\
&= {(m-l)+(m_k-l^\prime)p^k \choose m-l}
Y_{((m+m_kp^k)-(l+l^\prime)p^k)} \\
&= Y_{(m^\prime-(l+l^\prime p^k))}
\end{split}
\end{equation*}
 and similarly
\[ \frac{X_{(n-l)} X_k^{n_k-l^\prime}}{(n_k-l^\prime)!} =
X_{(n^\prime-(l+l^\prime p^k))} \] These substitutions transform
the right hand side of our equation into
\[ = \Gamma(n^\prime) \Gamma(m^\prime) \sum_{l,l^\prime}
Z_{(l+l^\prime p^k)} Y_{(m^\prime-(l+l^\prime p^k))}
X_{(n^\prime-(l+l^\prime p^k))} \] But, if we look at the
summation limits of $l = 0 \ldots \text{min}(n,m)$ and $l^\prime =
0 \ldots \text{min}(n_k,m_k)$, we see that it is really a single
summation running from $0$ to $\text{min}(n^\prime,m^\prime)$,
with $l+l^\prime p^k$ as the summation variable. That is
\[ = \Gamma(n^\prime) \Gamma(m^\prime)
\sum_{l=0}^{\text{min}(n^\prime,m^\prime)} Z_{(l)}
Y_{(m^\prime-l)} X_{(n^\prime-l)} \] which proves equation
\ref{lastEquation}.  This completes the proof.
\end{proof}

The main theorem of this paper, theorem \ref{TheMainTheorem},  is
now proved, modulo the fact that, for $\text{char}(k) \geq 2 d$,
such a collection $X_i,Y_i,Z_i$ of $d \times d$ matrices generate
a representation according to the formula
\begin{equation*}
\begin{split}
 &e^{xX_0+yY_0 + (z-xy/2)Z_0} e^{x^p X_1 + y^p Y_1 + (z^p-x^p
y^p/2)Z_1}\\ &\ldots e^{x^{p^m} X_m + y^{p^m} Y_m + (z^{p^m} -
x^{p^m} y^{p^m}/2)Z_m}
\end{split}
\end{equation*}
The proof of this is but a slight generalization of what has
already been done in characteristic zero.  We simply take the
standard Lie group proof of the Baker-Campbell-Hausdorff formula
for the Heisenberg group (see theorem 13.1 of \cite{hall}),
replace `derivative' with `formal derivative' of polynomials, and
show that the necessary results hold in characteristic $p$ when
$p$ is sufficiently large. We direct the interested reader to
section 13.5 of \cite{MyDissertation} for the full proof.


\section{Counterexamples and Sharpness}
\label{counterexampleSection}

If $\text{char}(k) = p$ is not large enough with respect to
dimension, conditions (1) and (2) of theorem \ref{TheMainTheorem}
do not necessarily hold.  Here we give two such examples.

Let $k$ be the finite field $\mathbb{Z}_2$, and let $V$ be the
$10$-dimensional sub-coalgebra of the Hopf algebra $A = k[x,y,z]$
given by
\[ V = \text{span}_k(1,x,y,z,x^2,xy,xz,y^2,yz,z^2) \]
i.e.~the span of all monomials of degree no greater than $2$,
endowed with the structure of a right $A$-comodule $\rho:V
\rightarrow V \otimes A$ given by the restriction of $\Delta$ to
$V$.  The corresponding representation, in this ordered basis, has
matrix formula
\[M =
\left(%
\begin{array}{ccccccccccc}
  1 & x & y & z & x^2 & xy & xz & y^2 & yz & z^2 \\
   & 1 & 0 & y & 0 & y & z+xy & 0 & y^2 & 0 \\
   & & 1 & 0 & 0 & x & 0 & 0 & z & 0 \\
   &  & & 1 & 0 & 0 & x & 0 & y & 0 \\
  &  &  &  &  1 & 0 & y & 0 & 0 & y^2 \\
    &  &  &  &  &1 & 0 & 0 & y & 0 \\
    &  &  &  &  &  & 1 & 0 & 0 & 0 \\
   &  &  &  &  &  &  & 1 & 0 & 0 \\
  &  &  &  &  &  &  &  & 1 & 0 \\
  &  &  &  &  &  &  &  &  & 1 \\
\end{array}%
\right)
\]
Let $X_0$ denote the matrix of coefficients of the monomial $x$ in
$M$, i.e.~the $10 \times 10$ matrix with $1's$ in its
$(1,2),(3,6)$, and $(4,7)$ entries, and $0's$ elsewhere. Similarly
define $Y_1$ as the matrix of coefficients of the monomial $y^2$.
Then one can check by hand that $X_0Y_1 - Y_1 X_0 \neq 0$, which
contradicts condition (2) of theorem \ref{TheMainTheorem}.

For a counterexample to condition (1) of theorem
\ref{TheMainTheorem}, again let $k = \mathbb{Z}_2$, and consider
the span of all monomials of degree no greater than $3$
\[ V = \text{span}_k(1, x, y, z, x^2, xy, xz, y^2, yz,
z^2, x^3, x^2y, x^2z, xy^2, xyz, xz^2, y^3, y^2z, yz^2, z^3)\]
This is a $20$-dimensional representation, who's matrix formula
would not fit on this page, but the intrepid reader can verify by
hand that $[X_1,Y_1] \neq Z_1$.

One may ask: is the condition $ p \leq 2 \text{dim}$ sharp?  That
is, given any pair $(p,d)$ with $p < 2d$, does there exists a
$d$-dimensional representation of $H_1$ over a field of
characteristic $p$ such that at least one of conditions (1) and
(2) of theorem \ref{TheMainTheorem} do not hold?  The answer is
no, at least for small dimensions.  For example, we claim that any
$2$-dimensional representation over \emph{any} positive
characteristic field necessarily satisfies these conditions; in
particular, even when $\text{char}(k) = 2$.  As any representation
of a unipotent algebraic group is upper-triangular, and since all
of $X_i, Y_i$ and $Z_i$ are nilpotent, we can take them all to be
scalar multiples of
\[
\left(%
\begin{array}{cc}
  0 & 1 \\
  0 & 0 \\
\end{array}%
\right)
\]
whence $X_i,Y_j$ and $Z_k$ obviously all commute for all $i,j$ and
$k$. We claim further that $Z_m = 0$ for all $m$.  To see this,
consider
\begin{equation*}
\begin{split}
X_m Y_m &=  (c_{ij})^{(p^m,0,0)} (c_{ij})^{(0,p^m,0)} \\
 &= Y_m X_m +
\left(\sum_{l=1}^{p^m-1} {p^m -l \choose 0}{p^m - l \choose 0} {l
\choose l} Z_{(l)} Y_{(p^m-l)} X_{(p^m-l)} \right) + Z_m \\
\end{split}
\end{equation*}
(see the proof of proposition \ref{H1CharpLargeThm1}) which, since
$X_m Y_m = Y_m X_m$, can be written
\[ 0 = Z_m + \left(\sum_{l=1}^{p^m-1} Z_{(l)} Y_{(p^m-l)} X_{(p^m-l)} \right) \]
which, in view of equation \ref{GaToH1Equation} can be written
\[ 0 = Z_m +  \left(\sum_{l=1}^{p^m-1} \Gamma(l)^{-1} \Gamma(p^m-l)^{-2} Z_{0}^{l_0} \ldots Z_{m-1}^{l_{m-1}}
Y_0^{r_0} \ldots Y_{m-1}^{r_{m-1}} X_0^{r_0} \ldots
X_{m-1}^{r_{m-1}} \right)
\]
where the $r_i$ are the $p$-digits of $p^m  - l$.  Let $0 < l <
p^m$.  Then the sum $(p^m - l) + l$ carries, whence, for some $i$,
$l_i$ and $r_i$ are both non-zero.  Then the summation term
\[ Z_{0}^{l_0} \ldots Z_{m-1}^{l_{m-1}}
Y_0^{r_0} \ldots Y_{m-1}^{r_{m-1}} X_0^{r_0} \ldots
X_{m-1}^{r_{m-1}} \] vanishes, since this entire product commutes
and $Z_i^{l_i} Y_i^{r_i} = 0$.  Thus the entire summation is zero,
whence so is $Z_m$.  This shows that every positivie
characteristic $2$-dimensional representation of $H_1$ satisfies
conditions (1) and (2) of theorem \ref{TheMainTheorem}, even when
$\text{char}(k) = 2 < 2 \text{ dimension } = 4$.


\section{Further Directions}

The last paragraph of the last section shows that the condition $p
\geq 2d$ is not sharp.  However, we suspect that this condition is
\emph{assymptotically} sharp, in one of the following senses.

\begin{conjecture}
There exists a prime $q$ such that, for every prime $p \geq q$,
there exists a $\frac{p+1}{2}$-dimensional module for $H_1$ over a
field of characteristic $p$ which does satisfy not at least one of
conditions (1) and (2) of theorem \ref{TheMainTheorem}.
\end{conjecture}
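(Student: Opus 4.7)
The plan is to produce, for every sufficiently large prime $p$, an explicit $\frac{p+1}{2}$-dimensional comodule over $A = k[x,y,z]$ in which at least one of the commutator identities of condition (1) or (2) of Theorem \ref{TheMainTheorem} fails. The natural source is a sub-coalgebra $V_p \subset A$ of dimension $(p+1)/2$: the comodule map is then simply the restriction of $\Delta$, and the matrices $X_m, Y_m, Z_m$ can be read off directly by extracting coefficients of the monomials $x^{p^m}, y^{p^m}, z^{p^m}$, paralleling the $10$- and $20$-dimensional counterexamples of Section \ref{counterexampleSection}.

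The first step is to identify a parameterized family $\{V_p\}$ of sub-coalgebras whose dimension is exactly $(p+1)/2$. Since $\Delta(x^a y^b z^c)$ lies in the span of monomials of degree $\leq a+b+c$ in each tensor factor, any sub-coalgebra is spanned by a set of monomials closed under the truncation implicit in $\Delta$, so natural candidates are degree-truncated spans (with small corrections) calibrated so that the monomial count hits $(p+1)/2$. The second step is to compute $X_m, Y_m$ and $Z_m$ from $V_p$ via Proposition \ref{H1FundamentalRelationProp}, which reduces to a combinatorial coefficient extraction in $\Delta$. The third step is to evaluate a target commutator; the most promising candidate is $[X_0, Y_1]$, since the 10-dimensional counterexample of Section \ref{counterexampleSection} fails condition (2) precisely via this commutator, and the obstruction there is a pure characteristic-$p$ phenomenon (carrying in the digit-$p$ position) that should persist under the proposed family.

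The principal obstacle is that the proof of Lemma \ref{H1remark} already goes through under the sharper hypothesis $p \geq 2d - 1$: if $r+s$ carries then some $r_i + s_i \geq p$, so $\max(r_i,s_i) \geq \lceil p/2 \rceil$, and for odd $p$ this equals $(p+1)/2$, which equals $d$ in the conjectural case. Since a $d \times d$ nilpotent matrix has nilpotency order at most $d$, this already forces $P_i^{r_i} = 0$ or $Q_i^{s_i} = 0$, and hence Proposition \ref{H1CharpLargeThm1} appears to remain valid at $d = (p+1)/2$. Consequently the conjecture as stated may in fact be false, with the true threshold of failure being $d = (p+3)/2$ rather than $d = (p+1)/2$. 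Carrying out the construction therefore requires either (i) isolating an obstruction beyond the reach of the Lemma \ref{H1remark} argument---for instance an interaction between carries in the $z$-exponent and the multinomial coefficients $\binom{s_3+t_3+l}{s_3,t_3,l}$ that forces a commutator to be non-zero even when each individual matrix power vanishes---or (ii) first sharpening Theorem \ref{TheMainTheorem} to the hypothesis $p \geq 2d - 1$, and then producing the explicit family at the reformulated sharp threshold. In either case, the computational heart of the proof is an explicit sub-coalgebra construction of the kind outlined above.
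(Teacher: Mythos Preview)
The statement you are attempting to prove is labeled a \emph{conjecture} in the paper and is left entirely open: the paper offers no proof, no sketch, and no evidence beyond the observation that the bound $p\geq 2d$ is not sharp for $d=2$. There is therefore nothing in the paper to compare your argument against.

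Your proposal is not a proof but a strategy, and you yourself identify the fatal obstacle: the argument of Lemma~\ref{H1remark} goes through verbatim at $d=(p+1)/2$. Concretely, a nilpotent $d\times d$ matrix satisfies $N^{d}=0$; if $r+s$ carries then some digit pair has $r_i+s_i\geq p$, hence $\max(r_i,s_i)\geq \lceil p/2\rceil=(p+1)/2=d$, so the corresponding factor $P_i^{r_i}$ or $Q_i^{s_i}$ already vanishes. Proposition~\ref{H1CharpLargeThm1} then applies unchanged, so conditions (1) and (2) of Theorem~\ref{TheMainTheorem} \emph{do} hold for every $\frac{p+1}{2}$-dimensional module. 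This is not a gap to be patched by a cleverer sub-coalgebra construction; it is evidence that the conjecture as stated is false, and that the correct sharp threshold (if one exists) is at least $d=(p+3)/2$. Your option~(ii)---first strengthening Theorem~\ref{TheMainTheorem} to $p\geq 2d-1$ and then seeking counterexamples at the next dimension---is the honest way forward, but that is a \emph{refutation} of the conjecture together with a proposed replacement, not a proof of it.
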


Or, perhaps the weaker

\begin{conjecture}
For arbitrarily large primes $p$, there exists a
$\frac{p+1}{2}$-dimensional module for $H_1$ over a field of
characteristic $p$ which does not satisfy at least one of
conditions (1) and (2) of theorem \ref{TheMainTheorem}.
\end{conjecture}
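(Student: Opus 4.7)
The plan is to exhibit, for infinitely many primes $p$, an explicit $\frac{p+1}{2}$-dimensional $H_1$-module over $\mathbb{F}_p$ that fails at least one of conditions (1) or (2) of Theorem \ref{TheMainTheorem}, mirroring the construction of the two counterexamples in Section \ref{counterexampleSection}.

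First I would localise the obstruction in the proof. The hypothesis $p \geq 2d$ enters Theorem \ref{TheMainTheorem} only through Lemma \ref{H1remark}, which is invoked to kill the ``middle'' summands of the fundamental relation \ref{H1FundamentalRelation} when deriving $[X_m, Y_m] = Z_m$ and $[X_n, Y_m] = 0$ for $n \neq m$. That lemma fails precisely when there exist $p$-adic digits $r_i, s_i$ with $r_i + s_i \geq p$ and both $r_i, s_i < d$, so any counterexample has to exploit exactly these middle digit pairs. This suggests looking for modules in which the matrices $X_i, Y_i, Z_i$ carry Jordan blocks of size near $(p+1)/2$, and in which the coefficient matrices $Z_{(l)} Y_{(p^m-l)} X_{(p^m-l)}$ contribute nontrivially for some $l$ near $p^m/2$.

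Next I would hunt for a suitable family of modules. The paper's sub-coalgebras of $k[x,y,z]$ spanned by monomials of bounded total degree have dimensions $\binom{D+3}{3}$, which grow cubically in $D$ and so generally cannot hit $\frac{p+1}{2}$ exactly for large $p$. More flexible candidates include sub-coalgebras of $k[x,y,z]$ given by monomials whose $(x,y,z)$-weights satisfy a $p$-dependent linear inequality, tuned so that the dimension is exactly $\frac{p+1}{2}$; or an $H_1$-stable subquotient of a controlled truncation of $V \otimes F^\ast V$, where $V$ is the standard $3$-dimensional module and $F^\ast V$ its Frobenius twist, arranged so that on the quotient the commutator $[X_0, Y_1]$ picks up a nonzero contribution from the diagonal identification of layers.

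Once a candidate is in hand the verification is mechanical: extract $X_m, Y_m, Z_m$ by reading off the coefficients of $x^{p^m}, y^{p^m}, z^{p^m}$ in the matrix formula and show that some entry of $[X_m, Y_m] - Z_m$ or of $[X_n, Y_m]$ is nonzero. The hardest part is producing an $H_1$-stable module of \emph{exactly} dimension $\frac{p+1}{2}$ for arbitrarily large $p$ that can be shown to violate (1) or (2); the paper's polynomial-degree trick does not scale, and ad-hoc Jordan-block constructions must still be checked against \ref{H1FundamentalRelation}. A less constructive fallback is a dimension-count argument: compare the dimension of the moduli space of $d$-dimensional $H_1$-comodules with that of the subvariety cut out by conditions (1)--(4) of Theorem \ref{TheMainTheorem}, and show the former strictly exceeds the latter at $d = \frac{p+1}{2}$, guaranteeing the existence of a counterexample without having to exhibit one.
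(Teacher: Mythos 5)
First, note that the statement you are trying to prove is stated in the paper as a \emph{conjecture}: the author offers no proof of it, and indeed presents it precisely because the question of asymptotic sharpness of the bound $p \geq 2d$ is left open. So there is no proof in the paper to compare yours against; the only question is whether your argument actually closes the conjecture. It does not. What you have written is a research program, not a proof, and you say as much yourself (``the hardest part is producing an $H_1$-stable module of exactly dimension $\frac{p+1}{2}$\dots''). Your opening observation --- that the hypothesis $p \geq 2d$ enters only through Lemma \ref{H1remark}, so any counterexample must exploit digit pairs $r_i + s_i \geq p$ with both $r_i, s_i < d$ --- is a correct and useful localisation of the obstruction, but it only tells you where a counterexample \emph{could} live, not that one exists. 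None of your candidate constructions is carried out: the weight-inequality sub-coalgebras of $k[x,y,z]$ are not shown to be $\Delta$-stable (an arbitrary span of monomials need not be a subcomodule, since $\Delta(z)$ mixes $x \otimes y$ into the $z$-coordinate), nor is it shown that their dimensions can be tuned to exactly $\frac{p+1}{2}$ for infinitely many $p$, nor that the resulting $[X_m,Y_m]-Z_m$ or $[X_n,Y_m]$ is nonzero. The $V \otimes F^\ast V$ subquotient is likewise only named, not constructed or verified against relation \ref{H1FundamentalRelation}.

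The ``dimension-count fallback'' has a more structural problem. Conditions (1) and (2) of Theorem \ref{TheMainTheorem} are constraints on the derived tuple $(X_i, Y_i, Z_i)$, and by Proposition \ref{H1CharpLargeThm1} they are automatically satisfied once $p \geq 2d$; at $d = \frac{p+1}{2}$ you are sitting exactly at the threshold, and the paper's own Section 4 shows the bound is \emph{not} sharp for small $d$ (every $2$-dimensional representation satisfies the conditions even in characteristic $2$). So a naive comparison of dimensions of ``the moduli space of $d$-dimensional comodules'' against ``the subvariety cut out by (1)--(4)'' cannot be expected to work uniformly; you would need to show the strict inequality holds asymptotically in $p$, and no such computation is sketched. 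As it stands, the conjecture remains open after your argument, and the honest conclusion of your proposal is exactly that.
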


But more importantly, the author strongly suspects that results
analogous to theorems \ref{GaCharpTheorem} and
\ref{TheMainTheorem} for the Additive and Heisenberg groups should
apply to a much wider class of unipotent algebraic groups. The
author has in fact proved this result for all of the so-called
generalized Heisenberg groups (though this is not in print), and
at the very least believes that a proof for all of the unipotent
upper triangular groups, the most important class of unipotent
groups, will soon appear in a sequel.

Perhaps a more interesting question, and one which has not been at
all addressed in this paper, is not \emph{that} this phenomenon
might be true for unipotent groups in general, but rather
\emph{why} it might be true of unipotent groups.  An algebraic
group is unipotent if and only if, over a field of characteristic
$p>0$, it is of exponent a power of $p$. The author believes that
it is exactly this property of unipotent groups which makes this
theorem true, along with perhaps a clever appeal to the
compactness theorem for first-order logic (but in what language?).
The author again hopes that such insights will be forthcoming in a
sequel.


\section*{Acknowledgements}

The author would like to sincerely thank his thesis advisor, Paul
Hewitt, under who's direction and advice these results originally
appeared. Thanks also to Dave Hemmer, for his thoughtful advice
and encouragement, and to Chris Bendel, for his generous reading
of this manuscript and invaluable suggestions.

\bibliography{common}
\bibliographystyle{plain}

\end{document}